\documentclass[a4paper,12pt,reqno]{amsart}
\usepackage{amsmath,amsfonts,amssymb,amsthm,enumerate}
\usepackage{tikz,graphicx}
\usepackage{subfigure, hyperref}
\usepackage{cleveref}
\usepackage{caption}
\usepackage{enumitem}
\usepackage{float,wrapfig}
\usepackage[labelsep=space]{caption}
\usepackage[font=footnotesize]{caption}
\newtheorem{lemma}{Lemma}
\newtheorem{theorem}{Theorem}[section]

\newtheorem{corollary}{Corollary}[section]
\newtheorem{remark}{Remark}[section]

\thispagestyle{empty}
\numberwithin{equation}{section}
\textwidth 16cm
\textheight 24cm
\topmargin -0.5cm
\oddsidemargin 0cm
\evensidemargin 0cm
\setlength{\topmargin}{-0.5cm}
\setlength{\footskip}{40pt}


\title[]{Stability of the TODA EQUATIONS RELATED TO A PERTURBED $R_I$ TYPE RECURRENCE RELATION }
\author{Vinay Shukla$^\dagger$}
\address{$^\dagger$Department of Mathematics, SCSET\\ Bennett University, Greater Noida-201310, Uttar Pradesh, India}
\email{vinayshukla4321@gmail.com}
\author{A. Swaminathan\, $^{\#\ddagger}$}{\thanks{$^{\#}$Corresponding author}}
   \address{$^\ddagger$Department of Mathematics\\ Indian Institute of Technology, Roorkee-247667, Uttarakhand, India}
   \email{mathswami@gmail.com, a.swaminathan@ma.iitr.ac.in}
\pagestyle{myheadings}

\allowdisplaybreaks
\bigskip
\allowdisplaybreaks

\begin{document}
\leftline{ \scriptsize \it  }
\keywords{Orthogonal Polynomials; Co-recursion; Co-dilation; $R_I$ type recurrence Relation; Toda equations.}
\subjclass[2020] {42C05, 15A24, 37K10}

\begin{abstract}
In this manuscript, a modified $R_I$ type recurrence relation 
is considered whose recurrence coefficients are perturbed by addition or multiplication of a constant. The perturbed system of recurrence coefficients is represented by Toda lattice equations, which are derived. These equations are then represented in a matrix form. With the help of this matrix representation, a known Lax pair is recovered. Inferences about the stability of resulting perturbed system of Toda equations are drawn based on numerical experiments.
\end{abstract}

\maketitle

\markboth{Vinay Shukla and A. Swaminathan}{Perturbed Toda}

\section{Introduction}
Orthogonal polynomials on the real line (OPRL) satisfy the following three term recurrence relation (TTRR)
\begin{align}\label{OPRL TTRR R1 paper}
	\mathcal{R}_{n+1}(x) = (x-\beta_{n+1})\mathcal{R}_n(x)-\gamma_{n+1} \mathcal{R}_{n-1}(x), \quad \mathcal{R}_{-1}(x) = 0, \quad \mathcal{R}_0(x) = 1, \quad n \geq 0.
\end{align}
The two parameters $\beta_n$ and $\gamma_n$, involved in \eqref{OPRL TTRR R1 paper}, can be modified by two fundamental mathematical operations, i.e., either by addition or by multiplication of a real number. The process is called co-recursion when the first term of the sequence $\{{\beta}_{n}\}_{n \geq 1}$ is perturbed by adding a constant, and co-dilation refers to the modification of the first term of the sequence $\{{\gamma}_n\}_{n \geq 1}$ by multiplying it with a constant \cite{chihara PAMS 1957,Dini thesis 1988}. The perturbation in $\{{\beta}_{n}\}_{n \geq 1}$ at $n=k$ is termed generalized co-recursion, while the perturbation in $\{{\gamma}_n\}_{n \geq 1}$ at $n=k$ is termed generalized co-dilation. When both $\{{\beta}_{n}\}_{n \geq 1}$ and $\{{\gamma}_n\}_{n \geq 1}$ are perturbed at $n=k$, this condition is referred to as generalized co-modification \cite{Paco perturbed recurrence 1990}. In this context, co-modified OPRL or co-polynomials on the real line (COPRL), are introduced and studied in \cite{Castillo co-polynomials on real line 2015}.

With regard to the measure $\psi(t)$ (a positive measure on the real line), let $\mathcal{P}_n(x;t)$ represent the monic orthogonal polynomials, where $t$ is the time variable that satisfies the following classical TTRR
\begin{align*}
&	\mathcal{R}_{n+1}(x;t)=(x-\beta_{n+1}(t))\mathcal{R}_n(x;t)-\gamma_{n+1}(t)\mathcal{R}_{n-1}(x;t), \quad n \geq 0, \\
	& \nonumber	\mathcal{R}_{-1}(x;t) = 0, \qquad \mathcal{R}_0(x;t) = 1,
\end{align*}
and $d\psi(t)=e^{-tx}d\psi$. It is known \cite{Esmail book} that the recurrence coefficients $\gamma_{n}(t)$ and $b_{n+1}(t)$ satisfy the semi-infinite Toda lattice equations of motion
\begin{align}\label{Toda TTRR}
	\dot{\gamma}_{n}(t) &= \gamma_{n}(t)[\beta_{n-1}(t)-\beta_{n}(t)],\nonumber \\
	\dot{\beta}_{n}(t) &= \gamma_{n}(t)-\gamma_{n+1}(t),  \quad n \geq 1,
\end{align}
with initial conditions $\beta_0(t)=1$, $\gamma_1(t)=0$, $\gamma_n(0)=\gamma_n$ and $\beta_n(0)=\beta_n$, where $\dot{f}$ represents the differentiation of $f$ with respect to $t$ in this case.

Consider the recurrence
\begin{align}\label{R1}
&  \mathcal{P}_{n+1}(z) = (z-c_{n+1})\mathcal{P}_n(z)-\lambda_{n+1} (z-a_{n+1})\mathcal{P}_{n-1}(z), \quad n\geq 0, \\ 
& \nonumber	\mathcal{P}_{-1}(z) = 0, \qquad \mathcal{P}_0(z) = 1, 
\end{align}
where $\{{\lambda}_n\}_{n \geq 1}$ and $\{c_n\}_{n \geq 1}$ are real sequences. A rational function $\psi_n(z)=\frac{\mathcal{P}_n(z)}{\prod_{j=1}^{n}(z-a_j)}$ exists if $\lambda_n \neq 0$ and $\mathcal{P}_n(a_n)\neq 0$ for $n\geq 1$, as was proven in \cite[Theorem 2.1]{Esmail masson JAT 1995}. Further, we have a linear moment functional $\mathcal{L}$ such that the orthogonality relations
\begin{align*}
\mathcal{L}[a_0]\neq 0, \quad \mathcal{L}\left[z^k \psi_n(z) \right]\neq  0, \quad 0\leq k < n,
\end{align*}
hold. According to \cite{Esmail masson JAT 1995}, \eqref{R1} is called the recurrence relation of $R_I$ type, and $\mathcal{P}_n(z), n\geq 1$, generated by it are called $R_I$ polynomials. Furthermore, \eqref{R1} is associated with a non-trivial positive measure of orthogonality defined either on the unit circle or on a subset of the real line whenever $a_n=0$, $n\geq 1$ \cite{Simon part1 2005}. Fixing $a_n=0$ and $z \in \mathbb{R}$, we arrive at a special form of $R_I$ type recurrence relation
\begin{align}\label{special R1 intro}
&	\mathcal{P}_{n+1}(x) = (x-c_{n+1})\mathcal{P}_n(x)-\lambda_{n+1}x \mathcal{P}_{n-1}(x), \quad n \geq 1,\\
	& \nonumber	\mathcal{P}_{-1}(x) = 0, \qquad \mathcal{P}_0(x) = 1
\end{align}
satisfied by orthogonal Laurent polynomials, where $\{c_n\}_{n \geq 1}$ is a real sequence and $\{{\lambda}_n\}_{n \geq 1}$ are positive real numbers. We refer to \cite{Castillo costa ranga veronese Favard theorem 2014,Costa felix ranga 2013,Kim stanton 2021} for some recent progress related to $R_I$ type recurrence relations and $R_I$ polynomials. The modification of recurrence coefficients in \eqref{R1} and \eqref{special R1 intro} at $n=k$, defined as
\begin{align}
	c_{k+1} &\rightarrow c_{k+1}+\mu_{k+1} , \qquad \rm(generalized~ co-recursion) \label{co-recursive condition DL}\\
	\lambda_{k+1} &\rightarrow \nu_{k+1}\lambda_{k+1}, \hspace{0.5cm}\qquad \rm(generalized~ co-dilation), \label{co-dilated condition DL}
\end{align}
where $k$ is a fixed non-negative integer, has been studied in \cite{swami vinay BMMS 2023} and resultant polynomials are termed as co-polynomials of $R_I$ type.

Let $\mathfrak{N}$ denote the moment functional such that the moments given by $\mathfrak{N}[x^k e^{-t(px+\frac{q}{x})}]$ exists where $p,q \in \mathbb{C}$ and $ k \in \mathbb{Z}$ and . The parametrized moment functional  $\mathfrak{N}^{(t)}$ for $t \geq 0$ is such that
\begin{align}\label{moment functional L_t}
	\mathfrak{N}^{(t)}[x^k]=\mathfrak{N}[x^k e^{-t(px+\frac{q}{x})}].	
\end{align}
Following \cite{Vinet Zhedavon integrable chain 1998}, if $\mathcal{P}_n(x;t)$ satisfy
\begin{align}\label{R_n Toda}
	\mathcal{P}_{n+1}(x;t) = (x-c_{n+1}(t))\mathcal{P}_n(x;t)-\lambda_{n+1}(t)x \mathcal{P}_{n-1}(x;t), \quad n \geq 1,
\end{align}
with $\mathcal{P}_0(x;t)=1$ and $\mathcal{P}_1(x;t)=x-c_1(t)$. Then, the recurrence coefficients $\lambda_n(t)$ and $c_{n}(t)$ satisfy relativistic Toda equations
\begin{equation}\label{Relativistic toda 1}
	 \dot{\lambda}_{n}=\lambda_n \left(\dfrac{1}{c_{n-1}}-\dfrac{1}{c_{n}}\right) ,
	\quad \dot{c}_{n} =c_{n}\left(\dfrac{\lambda_{n+1}}{c_{n+1}c_{n}}-\dfrac{\lambda_{n}}{c_{n}c_{n-1}}\right), \quad n \geq 1,
\end{equation}
when $q=1$ and $p=0$, and
\begin{align}\label{Relativistic toda 2}
	\dot{\lambda}_{n}=\lambda_{n}(\lambda_{n-1}+c_{n-1}-\lambda_{n+1}-c_{n}), \quad
	\dot{c}_{n} = c_{n}(\lambda_{n}-\lambda_{n+1}), \quad  n \geq 1,
\end{align}
when $q=0$ and $p=1$. Moreover, using a more general form of \eqref{R_n Toda} given as
\begin{align*}
	\mathcal{P}_{n+1}(x;t) = (x-c_{n+1}(t))\mathcal{P}_n(x;t)-\lambda_{n+1}(t)(x-a_n(t)) \mathcal{P}_{n-1}(x;t), \quad n \geq 1,
\end{align*}
with $\mathcal{P}_0(x;t)=1$ and $\mathcal{P}_1(x;t)=x-c_1(t)$, the relativistic Toda equations \eqref{Relativistic toda 1} and \eqref{Relativistic toda 2} have been further extended to generalized relativistic Toda equations (see \cite{Vinet Zhedavon integrable chain 1998}) that are expressed as
\begin{align*}
	&\dot{c}_{n}=a_{n}\lambda_{n}-a_{n+1}\lambda_{n+1}+c_{n}(\lambda_{n+1}-\lambda_{n}),\\
	&\dot{\lambda}_{n}=\lambda_{n}(\lambda_{n+1}-\lambda_{n-1}+c_{n}-c_{n-1}), \quad n \geq 1.
\end{align*}
In a recent work \cite{Bracciali ranga toda 2019}, extended relativistic Toda equations have been derived corresponding to the L-orthogonal polynomials \eqref{R_n Toda} via duo-directional modification $px +q/x$ in \eqref{moment functional L_t}. We refer to \cite{Fern Manas 2024, Deano Morey 2024} and references therein for some recent advancements in the theory of Toda equations. This manuscript aims to investigate the consequences of perturbing the recurrence coefficients in \eqref{R_n Toda} on the Toda equations of motion. Perturbed extended relativistic Toda equations are the name given to the equations so derived.
The Toda lattice equation for $n=k$ is found by considering a single perturbation of the recurrence coefficient; this is the focus of \Cref{Toda lattice equations}. A Lax pair for extended relativistic Toda equations, discussed in \cite{Bracciali ranga toda 2019}, is also recovered. A natural question arises regarding the stability of such a perturbed system. This aspect has been  addressed in this work, with \Cref{Stability} dedicated to investigating how the system's dynamics are influenced and whether the perturbed system demonstrates stable behavior. To the author's knowledge, such a study has not been conducted in the existing literature for Toda equations.

\section{Toda lattice equations}\label{Toda lattice equations}
\subsection{Perturbed extended relativistic Toda equations}
The monic polynomials $\mathcal{\hat{P}}_n(x;t)$ of degree $n$ satisfy
\begin{align}\label{R_n Toda perturbed}
\mathcal{\hat{P}}_{n+1}(x;t) = (x-\hat{c}_{n+1}(t))\mathcal{\hat{P}}_n(x;t)-\hat{\lambda}_{n+1}(t)x \mathcal{\hat{P}}_{n-1}(x;t), \quad n \geq 1,
\end{align}
where $\hat{c}_{n+1}(t)$ and $\hat{\lambda}_{n+1}(t)$ denote the modification of recurrence coefficients as described in \eqref{co-recursive condition DL} and \eqref{co-dilated condition DL}, e.g.,
\begin{align*}
&\hat{c}_{k+1}(t)={c}_{k+1}(t)-\mu_{k+1}(t), \quad \hat{\lambda}_{k+1}(t)=\nu_{k+1}(t){\lambda}_{k+1}(t), \quad {\rm for}~~ n=k, \\
& \hat{c}_{n+1}(t)={c}_{n+1}(t), \hspace{2.2cm} \hat{\lambda}_{n+1}(t)={\lambda}_{n+1}(t),\hspace{1.6cm} {\rm for} ~~ n \neq k.
\end{align*}
\begin{lemma}\label{Lemma toda}
If $\hat{\rho}_n=\mathfrak{\hat{N}}^{(t)}[x\mathcal{\hat{P}}_n(x;t)]$ for $n \geq 0$, then
\begin{align*}
	\hat{\rho}_n=\hat{\sigma}_{n,n}(t)\sum_{i=1}^{n+1}[\hat{c}_{i}(t)+\hat{\lambda}_{i+1}(t)], \quad n \geq 0.
\end{align*}
\end{lemma}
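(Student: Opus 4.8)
The plan is to prove the identity by induction on $n$, the engine being the three-term recurrence \eqref{R_n Toda perturbed} together with the (Laurent) orthogonality of $\mathcal{\hat{P}}_n$ with respect to $\mathfrak{\hat{N}}^{(t)}$. Concretely, I would use that $\mathfrak{\hat{N}}^{(t)}[x^{-j}\mathcal{\hat{P}}_n(x;t)]=0$ for $1\le j\le n$, while $\hat{\sigma}_{n,n}(t)=\mathfrak{\hat{N}}^{(t)}[\mathcal{\hat{P}}_n(x;t)]$ is the nonvanishing normalisation. The co-recursion/co-dilation perturbation at $n=k$ enters only through the hatted coefficients $\hat{c}_{n+1},\hat{\lambda}_{n+1}$ and, since it disturbs neither the recurrence form nor the orthogonality relations invoked, the computation stays uniform in $n$ and needs no case split at $n=k$.

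First I would extract two auxiliary recursions by feeding \eqref{R_n Toda perturbed} into the functional. Applying $\mathfrak{\hat{N}}^{(t)}$ directly gives
\begin{align*}
\hat{\sigma}_{n+1,n+1}=\hat{\rho}_n-\hat{c}_{n+1}\hat{\sigma}_{n,n}-\hat{\lambda}_{n+1}\hat{\rho}_{n-1},\quad n\ge 1,
\end{align*}
whereas multiplying \eqref{R_n Toda perturbed} by $x^{-1}$ before applying $\mathfrak{\hat{N}}^{(t)}$, and using the $j=1$ orthogonality $\mathfrak{\hat{N}}^{(t)}[x^{-1}\mathcal{\hat{P}}_{n+1}]=\mathfrak{\hat{N}}^{(t)}[x^{-1}\mathcal{\hat{P}}_{n}]=0$, yields the clean two-term relation
\begin{align*}
\hat{\sigma}_{n,n}=\hat{\lambda}_{n+1}\hat{\sigma}_{n-1,n-1},\quad n\ge 1.
\end{align*}
Substituting the second relation (at index $n+1$) into the first to eliminate $\hat{\sigma}_{n+1,n+1}$ produces the single recursion
\begin{align*}
\hat{\rho}_n=(\hat{c}_{n+1}+\hat{\lambda}_{n+2})\hat{\sigma}_{n,n}+\hat{\lambda}_{n+1}\hat{\rho}_{n-1},\quad n\ge 1,
\end{align*}
which is the heart of the argument.

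For the base case $n=0$ I would use the explicit $\mathcal{\hat{P}}_1(x;t)=x-\hat{c}_1(t)$: applying $\mathfrak{\hat{N}}^{(t)}$ gives $\hat{\sigma}_{1,1}=\hat{\rho}_0-\hat{c}_1\hat{\sigma}_{0,0}$, while the two-term relation at $n=1$ gives $\hat{\sigma}_{1,1}=\hat{\lambda}_2\hat{\sigma}_{0,0}$; together these yield $\hat{\rho}_0=(\hat{c}_1+\hat{\lambda}_2)\hat{\sigma}_{0,0}$, which is exactly $\hat{\sigma}_{0,0}\sum_{i=1}^{1}[\hat{c}_i+\hat{\lambda}_{i+1}]$. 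For the inductive step, assuming $\hat{\rho}_{n-1}=\hat{\sigma}_{n-1,n-1}\sum_{i=1}^{n}[\hat{c}_i+\hat{\lambda}_{i+1}]$, I substitute into the heart recursion and use $\hat{\lambda}_{n+1}\hat{\sigma}_{n-1,n-1}=\hat{\sigma}_{n,n}$ to factor out $\hat{\sigma}_{n,n}$; the resulting bracket reads $(\hat{c}_{n+1}+\hat{\lambda}_{n+2})+\sum_{i=1}^{n}[\hat{c}_i+\hat{\lambda}_{i+1}]=\sum_{i=1}^{n+1}[\hat{c}_i+\hat{\lambda}_{i+1}]$, which closes the induction.

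I expect the only genuinely delicate point to be the derivation of the two-term relation $\hat{\sigma}_{n,n}=\hat{\lambda}_{n+1}\hat{\sigma}_{n-1,n-1}$: this is where orthogonality is indispensable, and one must check that the index $j=1$ lies inside the vanishing window for both $\mathcal{\hat{P}}_{n+1}$ and $\mathcal{\hat{P}}_n$ (which holds for $n\ge 1$). Everything else is algebraic telescoping, and the whole scheme should transfer verbatim to the perturbed system precisely because the hats carry the modification through all three relations.
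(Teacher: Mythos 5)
Your proof is correct, and it is essentially the argument the paper itself appeals to: the paper omits the proof of \Cref{Lemma toda}, deferring to the unperturbed case in \cite{Bracciali ranga toda 2019}, where the same induction is run off the recurrence applied under the functional together with the two-term relation $\hat{\sigma}_{n,n}=\hat{\lambda}_{n+1}\hat{\sigma}_{n-1,n-1}$ (the hatted analogue of \eqref{sigma_k,k}). Your observation that no case split at $n=k$ is needed once everything is written in hatted quantities is exactly why the paper can omit the proof as ``similar.''
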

The proof of the above lemma is similar to the one given in \cite{Bracciali ranga toda 2019} and hence omitted. The next result provides a perturbed analogue of Theorem $1$ given in \cite{Bracciali ranga toda 2019}.

\begin{theorem}\label{Perturbed theorem Toda}
Let $\mathcal{P}_n(x;t)$ be the L-orthogonal polynomials generated by \eqref{R_n Toda} and $\mathcal{\hat{P}}_n(x;t)$ be its perturbed version defined by \eqref{R_n Toda perturbed}. The recurrence coefficients ${\lambda}_{n}(t)$ and $c_n(t)$ satisfy the following perturbed extended relativistic Toda equations:
\begin{align}\label{PERT c_n}
&\dfrac{\dot{c}_{k}}{c_{k}}=p({\lambda}_{k}-\nu_{k+1}{\lambda}_{k+1})+q\left(\dfrac{\nu_{k+1}{\lambda}_{k+1}}{(c_{k+1}-\mu_{k+1})c_{k}}-\dfrac{{\lambda}_{k}}{c_{k}c_{k-1}}\right), \quad   n=k, \nonumber\\
&\dot{c}_{k+1}=\dot{\mu}_{k+1}+p(c_{k+1}-\mu_{k+1})(\nu_{k+1}{\lambda}_{k+1}-{\lambda}_{k+2})+q\left(\dfrac{{\lambda}_{k+2}}{c_{k+2}}-\dfrac{\nu_{k+1}{\lambda}_{k+1}}{c_{k}} \right), \quad   n=k+1,\nonumber \\
& \dfrac{\dot{c}_{k+2}}{c_{k+2}}=p({\lambda}_{k+2}-{\lambda}_{k+3})+q\left(\dfrac{{\lambda}_{k+3}}{c_{k+2}c_{k+3}}-\dfrac{{\lambda}_{k+2}}{(c_{k+1}-\mu_{k+1})c_{k+2}} \right), \quad  n=k+2, \nonumber\\
&
\dfrac{\dot{c}_{n}}{c_{n}}=p({\lambda}_{n}-{\lambda}_{n+1})+q\left(\dfrac{{\lambda}_{n+1}}{c_{n+1}c_{n}}-\dfrac{{\lambda}_{n}}{c_{n}c_{n-1}} \right), \quad otherwise,
\end{align}
and for $n=k$,
\begin{align}\label{PERT lambda_n}
& \dfrac{\dot{\lambda}_{k+1}}{{\lambda}_{k+1}}=-\dfrac{\dot{\nu}_{k+1}}{{\nu}_{k+1}}- p({\lambda}_{k+2}+c_{k+1}-\mu_{k+1}+(\nu_{k+1}-1)\lambda_{k+1}-c_k-\lambda_{k})-q\left(\dfrac{1}{c_{k+1}-\mu_{k+1}}-\dfrac{1}{c_k} \right),\nonumber\\
& \dfrac{\dot{\lambda}_{k+2}}{{\lambda}_{k+2}}=-p[\lambda_{k+3}+c_{k+2}-(c_{k+1}-\mu_{k+1})-\nu_{k+1}\lambda_{k+1}]-q\left(\dfrac{1}{c_{k+2}}-\dfrac{1}{c_{k+1}-\mu_{k+1}} \right), \quad  n=k+1, \nonumber\\
&\dfrac{\dot{\lambda}_{n}}{{\lambda}_{n}}=p(\lambda_{n-1}+c_{n-1}-\lambda_{n+1}-c_n)+q\left(\dfrac{1}{c_{n-1}}-\dfrac{1}{c_n} \right), \quad otherwise,
\end{align}
with initial conditions $c_0(t)=1$, $\lambda_0(t)=-1$ and $\lambda_1(t)=0$. Here, variable $t$ has been omitted for simplicity.
\end{theorem}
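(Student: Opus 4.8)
The plan is to reproduce the derivation of Theorem~$1$ in \cite{Bracciali ranga toda 2019}, but carried out for the perturbed family $\mathcal{\hat{P}}_n(x;t)$, rather than to merely substitute the co-modified coefficients into the unperturbed equations. The two working tools are the quantities $\hat{\sigma}_{n,n}(t)$ and $\hat{\rho}_n(t)=\mathfrak{\hat{N}}^{(t)}[x\mathcal{\hat{P}}_n(x;t)]$ supplied by \Cref{Lemma toda}, together with the L-orthogonality of $\mathcal{\hat{P}}_n$ with respect to $\mathfrak{\hat{N}}^{(t)}$. First I would record how each recurrence coefficient is read off from these data: a norm-ratio relation of the form $\hat{\lambda}_{n+1}=\hat{\sigma}_{n+1,n+1}/\hat{\sigma}_{n,n}$ isolates $\hat{\lambda}_{n+1}$, while the telescoped identity $\hat{\rho}_n/\hat{\sigma}_{n,n}-\hat{\rho}_{n-1}/\hat{\sigma}_{n-1,n-1}=\hat{c}_{n+1}+\hat{\lambda}_{n+2}$, which is immediate from \Cref{Lemma toda}, then isolates $\hat{c}_{n+1}$.

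The analytic engine is the time evolution of the moment functional. Differentiating the defining relation \eqref{moment functional L_t} (and its perturbed counterpart) in $t$ yields $\dot{\mathfrak{\hat{N}}}^{(t)}[x^m]=-p\,\mathfrak{\hat{N}}^{(t)}[x^{m+1}]-q\,\mathfrak{\hat{N}}^{(t)}[x^{m-1}]$, and the raising contribution $px$ and lowering contribution $q/x$ are exactly what produce the two groups of terms on the right-hand sides of \eqref{PERT c_n} and \eqref{PERT lambda_n}. I would then differentiate $\hat{\sigma}_{n,n}$ and $\hat{\rho}_n$ in $t$, expand the polynomial $\dot{\mathcal{\hat{P}}}_n$ (of degree at most $n-1$, since $\mathcal{\hat{P}}_n$ is monic) in the basis $\{\mathcal{\hat{P}}_j\}_{j<n}$, and use L-orthogonality to annihilate all but a few surviving inner products. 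This produces logarithmic-derivative formulas such as $\dot{\hat{\lambda}}_{n+1}/\hat{\lambda}_{n+1}=\dot{\hat{\sigma}}_{n+1,n+1}/\hat{\sigma}_{n+1,n+1}-\dot{\hat{\sigma}}_{n,n}/\hat{\sigma}_{n,n}$, that is, the extended relativistic Toda equations written in the hatted variables.

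To pass to the stated form I would insert the co-recursion and co-dilation relations $\hat{c}_{k+1}=c_{k+1}-\mu_{k+1}$ and $\hat{\lambda}_{k+1}=\nu_{k+1}\lambda_{k+1}$ (with $\hat{c}_n=c_n$, $\hat{\lambda}_n=\lambda_n$ at the remaining indices) and differentiate them in $t$: the identities $\dot{\hat{c}}_{k+1}=\dot{c}_{k+1}-\dot{\mu}_{k+1}$ and $\dot{\hat{\lambda}}_{k+1}=\dot{\nu}_{k+1}\lambda_{k+1}+\nu_{k+1}\dot{\lambda}_{k+1}$ are what generate the explicit $\dot{\mu}_{k+1}$ and $\dot{\nu}_{k+1}/\nu_{k+1}$ contributions. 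For every $n\notin\{k,k+1,k+2\}$ the hatted and unhatted coefficients coincide and the equations collapse to the ``otherwise'' lines of \eqref{PERT c_n}--\eqref{PERT lambda_n}; among the affected indices, one checks directly that the equations for $\dot{c}_k,\dot{c}_{k+1},\dot{c}_{k+2}$ and for $\dot{\lambda}_{k+2}$ already follow from these substitutions together with the product-rule identities above.

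I expect the main obstacle, and the single place where a mechanical substitution fails, to be the equation for $\dot{\lambda}_{k+1}$ at the very level of the perturbed coefficient. There the derivation must use the perturbed value $\hat{\lambda}_{k+1}=\nu_{k+1}\lambda_{k+1}$ inside the normalization $\hat{\sigma}_{k+1,k+1}$ and inside the finite sum $\sum_{i=1}^{n+1}[\hat{c}_i+\hat{\lambda}_{i+1}]$ of \Cref{Lemma toda}, which for $n\ge k+1$ differs from its unperturbed counterpart by the correction $-\mu_{k+1}+(\nu_{k+1}-1)\lambda_{k+1}$. It is precisely this correction, propagated through the logarithmic derivative, that furnishes the extra term $(\nu_{k+1}-1)\lambda_{k+1}$ seen in the first line of \eqref{PERT lambda_n} and invisible to a blind substitution. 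Tracking carefully which of the defining relations are disturbed at each of $n=k,k+1,k+2$, and checking consistency with the initial data $c_0(t)=1$, $\lambda_0(t)=-1$, $\lambda_1(t)=0$, then constitutes the remaining (routine) bookkeeping.
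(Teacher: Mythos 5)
Your toolkit is the right one (L-orthogonality of the hatted family, the insertion of $-(px+q/x)$ coming from the exponential evolution of the moment functional, the quantities $\hat{\sigma}_{n,n}$, $\hat{\rho}_n$, telescoping, logarithmic derivatives), and for the four equations in \eqref{PERT c_n} and for the $\dot{\lambda}_{k+2}$ equation your route does land on the stated formulas. But the route itself --- derive the extended relativistic Toda equations entirely in the hatted variables and then substitute $\hat{c}_{k+1}=c_{k+1}-\mu_{k+1}$, $\hat{\lambda}_{k+1}=\nu_{k+1}\lambda_{k+1}$ --- is not the paper's, and it fails exactly at the step you yourself flag as the main obstacle. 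The all-hatted Toda equation for $\hat{\lambda}_{k+1}$ is
\begin{equation*}
\frac{\dot{\hat{\lambda}}_{k+1}}{\hat{\lambda}_{k+1}}
=p\bigl(\hat{\lambda}_{k}+\hat{c}_{k}-\hat{\lambda}_{k+2}-\hat{c}_{k+1}\bigr)
+q\left(\frac{1}{\hat{c}_{k}}-\frac{1}{\hat{c}_{k+1}}\right),
\end{equation*}
and $\hat{\lambda}_{k+1}$ never appears on its own right-hand side; no substitution of the co-modification relations into it can therefore create the term $(\nu_{k+1}-1)\lambda_{k+1}$ in the first line of \eqref{PERT lambda_n}. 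The mechanism you invoke --- the correction $(\nu_{k+1}-1)\lambda_{k+1}$ hidden in $\sum_{i=1}^{n+1}[\hat{c}_i+\hat{\lambda}_{i+1}]$ --- is real, but $\hat{\lambda}_{k+1}$ is already a summand at level $n=k-1$ as well as at level $n=k$, so in a consistent all-hatted derivation this correction cancels in the very subtraction that isolates $\dot{\hat{\lambda}}_{k+1}/\hat{\lambda}_{k+1}$. Your route gives the stated equation \emph{minus} the $p(\nu_{k+1}-1)\lambda_{k+1}$ term; worse, it would also replace $\lambda_{k+1}$ by $\nu_{k+1}\lambda_{k+1}$ in the equation for $\dot{\lambda}_{k}$, contradicting the ``otherwise'' clause of \eqref{PERT lambda_n} and \Cref{Main Remark 1}, which asserts that only two $\lambda$-equations are modified.

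The idea your plan is missing is that the paper runs the derivation \emph{twice} and mixes the outputs: once for the unperturbed pair $(\mathcal{P}_n,\mathfrak{N}^{(t)})$, yielding the telescoped identities \eqref{Toda c_k 1} and \eqref{Toda lambda_k 1} used at levels $n<k$, and once for the hatted pair $(\hat{\mathcal{P}}_n,\mathfrak{\hat{N}}^{(t)})$, yielding \eqref{Toda c_k 2}--\eqref{Toda c_k 4} and \eqref{Toda lambda_k 2}--\eqref{Toda lambda_k 3} used at levels $n\geq k$. The individual Toda equations are then obtained by subtracting \emph{across the boundary}: the unperturbed level-$(k-1)$ identity carries $-p[\lambda_{k+1}+\lambda_{k}+c_{k}]$, the hatted level-$k$ identity carries $-p[\lambda_{k+2}+\nu_{k+1}\lambda_{k+1}+c_{k+1}-\mu_{k+1}]$, and the non-cancellation $\nu_{k+1}\lambda_{k+1}-\lambda_{k+1}=(\nu_{k+1}-1)\lambda_{k+1}$ is precisely the extra term in the $\dot{\lambda}_{k+1}$ equation (this same mixing is what keeps the $\dot{\lambda}_{k}$ equation in unperturbed form). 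So a correct proof of the theorem as stated must keep both families and both moment functionals in play, establish the level identities for each (this is where relations \eqref{sigma_k,-1}--\eqref{a_n+1,n-a_n,n-1} comparing hatted with unhatted coefficients are needed), and take the unperturbed-against-hatted difference at the junction $n=k$, rather than work in hatted variables throughout and translate only at the end.
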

\begin{proof}
The sequences $\mathcal{P}_n(x;t)$ and $\mathcal{\hat{P}}_n(x;t)$ satisfy the following L-orthogonality conditions
\begin{align}
\mathfrak{\hat{N}}^{(t)}[x^{-n+j}\mathcal{\hat{P}}_n(x;t)]=0, \quad j= 0,1, \ldots n-1, \label{Hat L toda}\\
\mathfrak{N}^{(t)}[x^{-n+r}\mathcal{P}_n(x;t)]= 0, \quad r= 0,1, \ldots n-1. \nonumber
\end{align}
Let $\mathcal{P}_n(x;t)=\sum_{j=0}^{n}a_{n,j}x^j$ and $\mathcal{\hat{P}}_n(x;t)=\sum_{j=0}^{n}\hat{a}_{n,j}x^j$ where $a_{n,n}=1$ and $\hat{a}_{n,n}=1$. Then $\sigma_{n,-1}(t)=\mathfrak{N}^{(t)}[x^{-n-1}\mathcal{P}_n(x;t)]$, $\hat{\sigma}_{n,n}(t)=\mathfrak{N}^{(t)}[\mathcal{P}_n(x;t)]$, $\hat{\sigma}_{n,-1}(t)=\mathfrak{\hat{N}}^{(t)}[x^{-n-1}\mathcal{\hat{P}}_n(x;t)]$ and $\hat{\sigma}_{n,n}(t)=\mathfrak{\hat{N}}^{(t)}[\mathcal{\hat{P}}_n(x;t)]$. Clearly, for $n < k$,  $\hat{\sigma}_{n,n}(t)=\sigma_{n,n}(t)$ and $\hat{\sigma}_{n,-1}(t)=\sigma_{n,-1}(t)$.
From this, we have
\begin{align}\label{sigma_k,-1}
\begin{split}
&\hat{\sigma}_{k,-1}(t)=-\dfrac{\nu_{k+1}(t)\lambda_{k+1}(t)}{{c}_{k+1}(t)-\mu_{k+1}(t)}\sigma_{k-1,-1}(t), \\
&\hat{\sigma}_{n,-1}(t)=-\dfrac{\lambda_{n+1}(t)}{{c}_{n+1}(t)}\hat{\sigma}_{n-1,-1}(t), \quad n >k,
\end{split}
\end{align}
and
\begin{align}\label{sigma_k,k}
\begin{split}
&\hat{\sigma}_{k,k}(t)=\nu_{k+1}(t)\lambda_{k+1}(t)\sigma_{k-1,k-1}(t), \\
& \hat{\sigma}_{n,n}(t)=\lambda_{n+1}(t)\hat{\sigma}_{n-1,n-1}(t), \quad n >k.
\end{split}
\end{align}
From \eqref{R_n Toda} and \eqref{R_n Toda perturbed}, we have
\begin{align*}
a_{n,0}(t)= (-1)^nc_n(t)c_{n-1}(t) \ldots c_2(t) c_1(t), \quad n \geq 1, \\
\hat{a}_{n,0}(t)= (-1)^n\hat{c}_n(t)\hat{c}_{n-1}(t) \ldots \hat{c}_2(t) \hat{c}_1(t), \quad n \geq 1,	
\end{align*}
which implies
\begin{align}\label{a_n,0}
\begin{split}
&\hat{a}_{n,0}(t)=a_{n,0}(t), \quad n\leq k, \quad \hat{a}_{k+1,0}(t)=-({c}_{k+1}(t)-\mu_{k+1}(t))a_{k,0}(t),\\
\text{and} \quad & \hat{a}_{n,0}(t)=-c_n(t)\hat{a}_{n-1,0}(t), \quad n > k+1.
\end{split}
\end{align}
Again, using \eqref{R_n Toda perturbed}, we obtain
\begin{align*}
\hat{c}_{n+1}(t)+\hat{\lambda}_{n+1}(t)=\hat{a}_{n,n-1}(t)-\hat{a}_{n+1,n}(t), \quad n \geq 1,
\end{align*}
which implies
\begin{align}\label{a_n+1,n-a_n,n-1}
\begin{split}
&c_{n+1}(t)+\lambda_{n+1}(t)={a}_{n,n-1}(t)-{a}_{n+1,n}(t), \quad n < k, \\
&c_{k+1}(t)+\lambda_{k+1}(t)-\mu_{k+1}(t)+(\nu_{k+1}(t)-1)\lambda_{k+1}(t)=	{a}_{k,k-1}(t)-\hat{a}_{k+1,k}(t),\\
&c_{n+1}(t)+\lambda_{n+1}(t)= \hat{a}_{n,n-1}(t)-\hat{a}_{n+1,n}(t), \quad n > k.
\end{split}
\end{align}
From \eqref{Hat L toda}, we get
\begin{align*}
\mathfrak{\hat{N}}^{(t)}[x^{-n}\mathcal{\hat{P}}_{n-1}(x;t)\mathcal{\hat{P}}_n(x;t)]=0, \quad n \geq 1.
\end{align*}
Differentiating this with respect to (w.r.t.) $t$, we have
\begin{align}\label{Differentiation wrt T toda}
\mathfrak{\hat{N}}^{(t)}&[x^{-n}\mathcal{\dot{\hat{P}}}_{n-1}(x;t)\mathcal{\hat{P}}_n(x;t)]+
\mathfrak{\hat{N}}^{(t)}[x^{-n}\mathcal{\hat{P}}_{n-1}(x;t)\mathcal{\dot{\hat{P}}}_n(x;t)] \nonumber \\
&-\mathfrak{\hat{N}}^{(t)}[x^{-n}\left(px+\frac{q}{x}\right)\mathcal{\hat{P}}_{n-1}(x;t)\mathcal{\hat{P}}_n(x;t)]=0, \quad n \geq 1.
\end{align}
where $\mathcal{\dot{\hat{P}}}_n(x;t)=\dfrac{\partial}{\partial t}\mathcal{\hat{P}}_n(x;t)$. Again from \eqref{Hat L toda}, we get $\mathfrak{\hat{N}}^{(t)}[x^{-n}\mathcal{\dot{\hat{P}}}_{n-1}(x;t)\mathcal{\hat{P}}_n(x;t)] = 0$. Further, since $\mathcal{\dot{\hat{P}}}_n(x;t)=\sum_{j=0}^{n-1}\dot{\hat{a}}_{n,j}x^j$ , we obtain
\begin{align*}
&\mathfrak{\hat{N}}^{(t)}[x^{-n}\mathcal{\hat{P}}_{n-1}(x;t)\mathcal{\dot{\hat{P}}}_n(x;t)]=\dot{\hat{a}}_{n,0}(t)\hat{\sigma}_{n-1,-1}(t), \\
&\mathfrak{\hat{N}}^{(t)}[x^{-n+1}\mathcal{\hat{P}}_{n-1}(x;t)\mathcal{\hat{P}}_n(x;t)]=\hat{\sigma}_{n,n}(t), \\
&\mathfrak{\hat{N}}^{(t)}[x^{-n-1}\mathcal{\hat{P}}_{n-1}(x;t)\mathcal{\hat{P}}_n(x;t)]=\hat{\sigma}_{n,n}(t)=\hat{a}_{n-1,0}(t)\hat{\sigma}_{n,-1}(t), \quad n \geq 1.
\end{align*}
Putting these values in \eqref{Differentiation wrt T toda}, we have
\begin{align*}
\dot{\hat{a}}_{n,0}(t)\hat{\sigma}_{n-1,-1}(t)=p\hat{\sigma}_{n,n}(t)+q\hat{a}_{n-1,0}(t)\hat{\sigma}_{n,-1}(t), \quad n \geq 1.
\end{align*}
Substituting \eqref{sigma_k,-1}, \eqref{sigma_k,k} and \eqref{a_n,0} in the above expression, we conclude
\begin{align}
\sum_{i=1}^{n}\dfrac{\dot{c}_{i}(t)}{c_{i}(t)}&=-p\lambda_{n+1}(t)+q\dfrac{\lambda_{n+1}(t)}{c_{n+1}(t)c_{n}(t)}, \quad n < k, \label{Toda c_k 1} \\
\sum_{i=1}^{k}\dfrac{\dot{c}_{i}(t)}{c_{i}(t)}&=-p\nu_{k+1}(t)\lambda_{k+1}(t)+q\dfrac{\nu_{k+1}(t)\lambda_{k+1}(t)}{(c_{k+1}(t)-\mu_{k+1}(t))c_{k}(t)}, \quad n = k, \label{Toda c_k 2} \\
\sum_{i=1}^{k+1}\dfrac{\dot{\hat{c}}_{i}(t)}{\hat{c}_{i}(t)}&=\sum_{i=1}^{k}\dfrac{\dot{c}_{i}(t)}{c_{i}(t)}+\dfrac{\dot{c}_{k+1}(t)-\dot{\mu}_{k+1}(t)}{c_{k+1}(t)-\mu_{k+1}(t)}=-p\lambda_{k+2}(t)\nonumber \\
&+q\dfrac{\lambda_{k+2}(t)}{{(c_{k+1}(t)-\mu_{k+1}(t))c_{k+2}(t)}}, \quad n = k+1, \label{Toda c_k 3}\\
\sum_{i=1}^{n}\dfrac{\dot{\hat{c}}_{i}(t)}{\hat{c}_{i}(t)}&=-p\lambda_{n+1}(t)+q\dfrac{\lambda_{n+1}(t)}{c_{n+1}(t)c_{n}(t)}, \quad n > k+1. \label{Toda c_k 4}
\end{align}
Substraction of \eqref{Toda c_k 3} from \eqref{Toda c_k 4} for $n=k+2$, \eqref{Toda c_k 2} from \eqref{Toda c_k 3}, and \eqref{Toda c_k 1} from \eqref{Toda c_k 2} for $n=k-1$ yields the first four relations given in the hypothesis of the \Cref{Perturbed theorem Toda}.\par
Note that \eqref{Hat L toda} implies $\mathfrak{\hat{N}}^{(t)}[x^{-n}\mathcal{\hat{P}}^2_n(x;t)] =\hat{\sigma}_{n,n}(t)$. Differentiating this w.r.t. $t$ and substituting $\mathfrak{\hat{N}}^{(t)}[x^{-n}\mathcal{\dot{\hat{P}}}_{n}(x;t)\mathcal{\hat{P}}_n(x;t)] = 0$, we obtain
\begin{align}\label{Above Lemma toda}
-\mathfrak{\hat{N}}^{(t)}[x^{-n}\left(px+\frac{q}{x}\right)\mathcal{\hat{P}}^2_n(x;t)]=
\dot{\hat{\sigma}}_{n,n}(t), \quad n \geq 1.
\end{align}
Now, from \eqref{Hat L toda}, we observe that
\begin{align}\label{Below lemma eq 1}
\mathfrak{\hat{N}}^(t) [x^{-n-1}\mathcal{\hat{P}}^2_n(x;t)] = \hat{a}_{n,0}(t)\hat{\sigma}_{n,-1}(t), \quad n \geq 0	,
\end{align}
and using \Cref{Lemma toda}, we get
\begin{align}\label{Below lemma eq 2}
\mathfrak{\hat{N}}^(t) [x^{-n+1}\mathcal{\hat{P}}^2_n(x;t)] =  \hat{a}_{n,n-1}(t)\hat{\sigma}_{n,n}(t)+\hat{\sigma}_{n,n}(t)\sum_{i=1}^{n+1}[\hat{c}_{i}(t)+\hat{\lambda}_{i+1}(t)],\quad n \geq 0.
\end{align}
Using \eqref{a_n+1,n-a_n,n-1}, we obtain
\begin{align}\label{Below lemma eq 3}
\begin{split}
&{a}_{n,n-1}(t)+\sum_{i=1}^{n+1}[{c}_{i}(t)+{\lambda}_{i+1}(t)]={\lambda}_{n+2}(t)+{\lambda}_{n+1}(t)+{c}_{n+1}(t),\quad n < k,\\
&{a}_{k,k-1}(t)+\sum_{i=1}^{n+1}[\hat{c}_{i}(t)+\hat{\lambda}_{i+1}(t)]={\lambda}_{k+2}(t)+\nu_{k+1}(t){\lambda}_{k+1}(t)+{c}_{k+1}(t)-\mu_{k+1}(t),\quad n = k,\\
&\hat{a}_{n,n-1}(t)+\sum_{i=1}^{n+1}[\hat{c}_{i}(t)+\hat{\lambda}_{i+1}(t)]={\lambda}_{n+2}(t)+{\lambda}_{n+1}(t)+{c}_{n+1}(t),\quad n > k.
\end{split}
\end{align}
Substituting \eqref{Below lemma eq 1}, \eqref{Below lemma eq 2} and \eqref{Below lemma eq 3} in \eqref{Above Lemma toda} and dividing by $\hat{\sigma}_{n,n}(t)$, we conclude that
\begin{align}
-p[\lambda_{n+2}(t)&+\lambda_{n+1}(t)+c_{n+1}(t)]-\dfrac{q}{c_{n+1}(t)}=\dfrac{\dot{\sigma}_{0,0}(t)}{\sigma_{0,0}(t)}+\sum_{i=2}^{n+1}\dfrac{\dot{\lambda}_{i}(t)}{\lambda_{i}(t)}, \quad n < k, \label{Toda lambda_k 1}\\
-p[\lambda_{k+2}(t)&+\nu_{k+1}\lambda_{k+1}(t)+c_{k+1}(t)-\mu_{k+1}(t)]-\dfrac{q}{c_{k+1}(t)-\mu_{k+1}(t)} \nonumber \\
&=\dfrac{\dot{\sigma}_{0,0}(t)}{\sigma_{0,0}(t)}+\sum_{i=2}^{n+1}\dfrac{\dot{\lambda}_{i}(t)}{\lambda_{i}(t)}+\dfrac{\dot{\nu}_{k+1}(t)}{\nu_{k+1}(t)}=\dfrac{\dot{\sigma}_{0,0}(t)}{\sigma_{0,0}(t)}+\sum_{i=2}^{n+1}\dfrac{\dot{\hat{\lambda}}_{i}(t)}{\hat{\lambda}_{i}(t)}, \quad n = k, \label{Toda lambda_k 2}\\
-p[\lambda_{n+2}(t)&+\lambda_{n+1}(t)+c_{n+1}(t)]-\dfrac{q}{c_{n+1}(t)}=\dfrac{\dot{\sigma}_{0,0}(t)}{\sigma_{0,0}(t)}+\sum_{i=2}^{n+1}\dfrac{\dot{\hat{\lambda}}_{i}(t)}{\hat{\lambda}_{i}(t)}, \quad n > k. \label{Toda lambda_k 3}
\end{align}
Substraction of \eqref{Toda lambda_k 3} from \eqref{Toda lambda_k 2} for $n=k+1$ and \eqref{Toda lambda_k 1} for $n=k-1$ from \eqref{Toda lambda_k 2} yields the last three relations given in the hypothesis of the \Cref{Perturbed theorem Toda}. This completes the proof. \qedhere\par

\end{proof}

\begin{corollary}
With the help of \Cref{Perturbed theorem Toda}, the perturbed version of \eqref{Relativistic toda 1} and \eqref{Relativistic toda 2}, the relativistic Toda equations, can be obtained by substituting either $p=0$, $q=1$ or $p=1$, $q=0$.
\end{corollary}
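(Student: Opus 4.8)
The plan is to treat this corollary as a pure specialization of the parametrized system established in \Cref{Perturbed theorem Toda}. The equations \eqref{PERT c_n} and \eqref{PERT lambda_n} were derived from the time-evolved moment functional carrying the duo-directional weight factor $px+q/x$, and the two relativistic Toda systems \eqref{Relativistic toda 1} and \eqref{Relativistic toda 2} were already identified in the introduction as the $(p,q)=(0,1)$ and $(p,q)=(1,0)$ reductions in the unperturbed case. Thus the whole argument amounts to substituting these two parameter choices into the perturbed system and reading off the result.

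First I would set $p=0$, $q=1$ in \eqref{PERT c_n} and \eqref{PERT lambda_n}. Every $p$-term vanishes, leaving only the fractional $q$-contributions. On the generic (``otherwise'') branch this gives $\dot c_n/c_n=\lambda_{n+1}/(c_{n+1}c_n)-\lambda_n/(c_nc_{n-1})$ and $\dot\lambda_n/\lambda_n=1/c_{n-1}-1/c_n$, which are precisely \eqref{Relativistic toda 1}. On the affected indices $n=k,k+1,k+2$ the same substitution returns the same structural relations but with the coefficients replaced through $\hat c_{k+1}=c_{k+1}-\mu_{k+1}$ and $\hat\lambda_{k+1}=\nu_{k+1}\lambda_{k+1}$, supplemented by the terms $\dot\mu_{k+1}$ and $\dot\nu_{k+1}/\nu_{k+1}$ that encode the time dependence of the perturbation. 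By definition this is the perturbed form of \eqref{Relativistic toda 1}.

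Next I would repeat the substitution with $p=1$, $q=0$, which now annihilates the fractional terms and retains the polynomial ones. The generic branch reproduces \eqref{Relativistic toda 2}, namely $\dot c_n/c_n=\lambda_n-\lambda_{n+1}$ and $\dot\lambda_n/\lambda_n=\lambda_{n-1}+c_{n-1}-\lambda_{n+1}-c_n$, while the branches at $n=k,k+1,k+2$ again carry the perturbation through the replacements $c_{k+1}\mapsto c_{k+1}-\mu_{k+1}$, $\lambda_{k+1}\mapsto\nu_{k+1}\lambda_{k+1}$ together with the $\dot\mu_{k+1},\dot\nu_{k+1}$ corrections, yielding the perturbed form of \eqref{Relativistic toda 2}.

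Since the statement is a straight specialization, I do not expect a genuine obstacle; the only care needed is bookkeeping. I would match the multi-branch index ranges of \eqref{PERT c_n}--\eqref{PERT lambda_n} against the single-index forms \eqref{Relativistic toda 1}--\eqref{Relativistic toda 2}, and verify the consistency check that switching off the perturbation ($\mu_{k+1}=0$, $\nu_{k+1}=1$, hence $\dot\mu_{k+1}=\dot\nu_{k+1}=0$) collapses all branches to a single unperturbed equation. This confirms that the perturbed relativistic Toda systems are exactly the $(p,q)\in\{(0,1),(1,0)\}$ slices of the perturbed extended relativistic Toda equations.
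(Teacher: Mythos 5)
Your proposal is correct and matches the paper's intent exactly: the paper states this corollary without proof precisely because it is the direct specialization you carry out, namely substituting $(p,q)=(0,1)$ and $(p,q)=(1,0)$ into \eqref{PERT c_n}--\eqref{PERT lambda_n}, whose generic branches then reduce to \eqref{Relativistic toda 1} and \eqref{Relativistic toda 2} while the branches at $n=k,k+1,k+2$ carry the perturbation data. Your added consistency check (setting $\mu_{k+1}=0$, $\nu_{k+1}=1$ to collapse all branches) is the same observation the paper records in the remark immediately following the corollary.
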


\begin{remark}
Clearly, we get \cite[Theorem 1]{Bracciali ranga toda 2019} by substituting $\mu_{k+1}=0$ and $\nu_{k+1}=1$ in \Cref{Perturbed theorem Toda}.
\end{remark}


\begin{remark}\label{Main Remark 1}
It is noteworthy that a single modification in the recurrence coefficients for $n=k$ affects three Toda equation of motion for $c_n$, i.e., for $n=k$, $n= k+1$ and $n=k+2$, and two Toda equation of motion for $\lambda_{n}$ i.e., for $n=k$ and $n=k+1$, with the remaining levels unchanged.
\end{remark}

\subsection{Recovering a Lax pair from a matrix differential representation} A matrix differential equation corresponding to perturbed extended relativistic Toda equations given in \Cref{Perturbed theorem Toda} is derived in this subsection, which is further used to recover a Lax pair presented in \cite{Bracciali ranga toda 2019}. A matrix differential equation of the form
\begin{align*}
	\mathcal{\dot{S}}=[\mathcal{S},\mathcal{T}]=\mathcal{S}\mathcal{T}-\mathcal{T}\mathcal{S},
\end{align*}
is called a Lax representation for Toda lattice equations \eqref{Toda TTRR} and the pair $\{\mathcal{S},\mathcal{T} \}$ is called a Lax pair \cite{Deift book 1999} where $\mathcal{S}$ and $\mathcal{T}$ are semi-infinite tri-diagonal and strictly lower Hessenberg matrices in case of \eqref{Toda TTRR} \cite{Nakamura 2004}, lower and upper bi-diagonal matrices in case of relativistic Toda equations \eqref{Relativistic toda 1} and \eqref{Relativistic toda 2} \cite{walter toda laurent 2002}, and upper Hessenberg and tri-diagonal matrices in case of extended relativistic Toda equations \cite{Bracciali ranga toda 2019}.
\begin{theorem}
Let $\gamma_{n}(t)=\lambda_{n+1}(t)+c_n(t)$, $c_0(t)=1$, $\lambda_{0}=-1$ and $\lambda_{1}=0$, with further restriction $\lambda_{N+1}=0$. Then, for $1< k < N$, the perturbed extended relativistic Toda equations satisfies matrix differential equation
\begin{align}\label{MDE PERT}
\mathcal{\dot{E}}_n=\mathcal{E}_n\mathcal{X}_n-\mathcal{Y}_n\mathcal{E}_n,
\end{align}
where
\begin{align*}
\mathcal{X}_n=p\mathcal{L}'_n\mathcal{F}_n+q\mathcal{L}'_n\mathcal{G}_n+ \dfrac{\mathcal{E}^{-1}_n}{2} [p\mathcal{H}_n+q\mathcal{J}_n], \quad
\mathcal{Y}_n=p\mathcal{F}_n\mathcal{L}''_n+q\mathcal{G}_n\mathcal{L}''_n-  [p\mathcal{H}_n+q\mathcal{J}_n]\dfrac{\mathcal{E}^{-1}_n}{2},	
\end{align*}
and $\mathcal{E}_n$, $\mathcal{X}_n$, $\mathcal{Y}_n$, $\mathcal{F}_n$, $\mathcal{G}_n$, $\mathcal{H}_n$, $\mathcal{J}_n$ and $\mathcal{K}_n$ are the square matrices of order $N$ given by
\begin{align*}
\mathcal{E}_n=
\begin{pmatrix}
\gamma_1 & \gamma_2 & \ldots & \gamma_{k-1} & \gamma_k & \gamma_{k+1} & \gamma_{k+2} & \ldots & \gamma_{N}\\
\lambda_2 & \gamma_2 & \ldots & \gamma_{k-1} & \gamma_k & \gamma_{k+1} & \gamma_{k+2} & \ldots & \gamma_{N}\\
0 & \lambda_3 & \ldots & \gamma_{k-1} & \gamma_k & \gamma_{k+1} & \gamma_{k+2} & \ldots & \gamma_{N}\\
\vdots & \vdots & \ddots & \vdots & \vdots & \vdots & \vdots & & \vdots\\
0 & 0 & \ldots & \lambda_{k} & \gamma_{k} & \gamma_{k+1} & \gamma_{k+2} & \ldots & \gamma_{N}\\
0 & 0 & \ldots & 0 & \lambda_{k+1} & \gamma_{k+1} & \gamma_{k+2} & \ldots & \gamma_{N}\\
0 & 0 & \ldots & 0 & 0 & \lambda_{k+2} & \gamma_{k+2} & \ldots & \gamma_{N} \\
\vdots & \vdots & \ddots & \vdots & \vdots & \ddots & \ddots & & \vdots\\
0 & 0 & \ldots & 0 & 0 & 0 & 0 & \lambda_{N} & \gamma_{N}
\end{pmatrix},
\end{align*}

\begin{align*}
\mathcal{L}_n=
\begin{pmatrix}
\gamma_1 & \gamma_2 & \ldots & \gamma_{k-1} & \gamma_k & \gamma_{k+1} & \gamma_{k+2} & \ldots & \gamma_{N}\\
\lambda_2 & \gamma_2 & \ldots & \gamma_{k-1} & \gamma_k & \gamma_{k+1} & \gamma_{k+2} & \ldots & \gamma_{N}\\
0 & \lambda_3 & \ldots & \gamma_{k-1} & \gamma_k & \gamma_{k+1} & \gamma_{k+2} & \ldots & \gamma_{N}\\
\vdots & \vdots & \ddots & \vdots & \vdots & \vdots & \vdots & & \vdots\\
0 & 0 & \ldots & \lambda_{k} & \gamma_{k} & \gamma_{k+1} & \gamma_{k+2} & \ldots & \gamma_{N}\\
0 & 0 & \ldots & 0 & \nu_{k+1}\lambda_{k+1} & \gamma_{k+1}-\mu_{k+1} & \gamma_{k+2} & \ldots & \gamma_{N}\\
0 & 0 & \ldots & 0 & 0 & \lambda_{k+2} & \gamma_{k+2} & \ldots & \gamma_{N} \\
\vdots & \vdots & \ddots & \vdots & \vdots & \ddots & \ddots & & \vdots\\
0 & 0 & \ldots & 0 & 0 & 0 & 0 & \lambda_{N} & \gamma_{N}
\end{pmatrix},
\end{align*}

\begin{align*}
\mathcal{F}_n=
\begin{pmatrix}
\lambda_1 & 0 & 0 & \ldots  & 0 & 0 & \ldots & 0\\
-\lambda_2 & \lambda_2 & 0 & \ldots  & 0 & 0 & \ldots & 0\\
0 & -\lambda_3 & \lambda_3 & \ldots  & 0 & 0 & \ldots & 0\\
\vdots & \vdots & \ddots & \ddots  & \vdots & \vdots & & \vdots\\
0 & \ldots & 0 & \ldots  & \lambda_{k} & 0 & \ldots & 0\\
0 & \ldots & 0 & \ldots  & -\lambda_{k+1} & \lambda_{k+1} & \ldots & 0 \\
\vdots & \ddots & \vdots  & \ddots & \ddots & \ddots & \ddots & \vdots\\
0 & \ldots & 0 & \ldots & 0 & 0 & -\lambda_{N} & \lambda_{N}
\end{pmatrix},
\end{align*}

\begin{align*}
\mathcal{G}_n=
\begin{pmatrix}
\frac{1}{\gamma_1-\lambda_2} & -\frac{1}{\gamma_1-\lambda_2} & 0 & \ldots  & 0 & 0 & \ldots & 0\\
0 & \frac{1}{\gamma_2-\lambda_3} & -\frac{1}{\gamma_2-\lambda_3} & \ldots  & 0 & 0 & \ldots & 0\\
0 & 0 & \frac{1}{\gamma_3-\lambda_4} & \ldots  & 0 & 0 & \ldots & 0\\
\vdots & \vdots & \ddots & \ddots  & \ddots & \vdots & & \vdots\\
0 & \ldots & 0 & \ldots  & -\frac{1}{\gamma_{k}-\lambda_{k+1}} & 0 & \ldots & 0\\
0 & \ldots & 0 & \ldots  & \frac{1}{\gamma_{k+1}-\lambda_{k+2}-\mu_{k+1}} & -\frac{1}{\gamma_{k+1}-\lambda_{k+2}-\mu_{k+1}} & \ldots & 0 \\
\vdots & \ddots & \vdots  & \ddots & \ddots & \ddots & & \vdots\\
0 & \ldots & 0 & \ldots & 0 & 0 & 0 & \frac{1}{\gamma_{N}-\lambda_{N+1}}
\end{pmatrix},
\end{align*}
and
\begin{align*}
\mathcal{H}_n=
\begin{pmatrix}
0 & 0 & \ldots & 0 & h_1 & h_2 & 0 & \ldots & 0\\
0 & 0 & \ldots & 0 & h_1 & h_2 & 0 & \ldots & 0\\
0 & 0 & \ldots & 0 & h_1 & h_2 & 0 & \ldots & 0\\
\vdots & \vdots & \ddots & \vdots & \vdots & \vdots & \vdots & & \vdots\\
0 & 0 & \ldots & 0 & h_1 & h_2 & 0 & \ldots & 0 \\
0 & 0 & \ldots & h_4 & h_3 & h_2 & 0 & \ldots & 0 \\
0 & 0 & \ldots & 0 & h_5 & h_5 & 0 & \ldots & 0 \\
\vdots & \vdots & \ddots & \vdots & \vdots & \vdots & \vdots & & \vdots\\
0 & 0 & \ldots & 0 & 0 & 0 & 0 & \ldots & 0
\end{pmatrix},
\end{align*}
where $h_1=[-\frac{\dot{\nu}_{k+1}}{p\nu_{k+1}}+(1-\nu_{k+1})\gamma_{k}+\mu_{k+1}]\lambda_{k+1}$, $h_2=\lambda_{k+1}[\nu_{k+1}(\gamma_{k+1}-\mu_{k+1})-\gamma_{k+1}]+p^{-1}\dot{\mu}_{k+1}$, $h_3=-\frac{\dot{\nu}_{k+1}\lambda_{k+1}}{p\nu_{k+1}}+(1-\nu_{k+1})\lambda_{k}\lambda_{k+1}$, $h_4=-(1-\nu_{k+1})\lambda_{k}\lambda_{k+1}$ and $h_5=-(1-\nu_{k+1})\lambda_{k+1}\lambda_{k+2}$.

\begin{align*}
\mathcal{J}_n=
\begin{pmatrix}
0 & 0 & \ldots & 0 & j_1 & j_2 & j_3 & 0 & \ldots & 0\\
0 & 0 & \ldots & 0 & j_1 & j_2 & j_3 & 0 & \ldots & 0\\
0 & 0 & \ldots & 0 & j_1 & j_2 & j_3 & 0 & \ldots & 0\\
\vdots & \vdots & \ddots & \vdots & \vdots & \vdots & \vdots & \vdots & &\vdots\\
0 & 0 & \ldots & 0 & j_4 & j_5 & j_3 & 0 & \ldots & 0\\
0 & 0 & \ldots & 0 & j_4 & 0 & 0 & 0 & \ldots & 0\\
0 & 0 & \ldots & 0 & 0 & 0 & 0 & 0 & \ldots & 0 \\
\vdots & \vdots & \ddots & \vdots & \vdots & \vdots & \vdots & \vdots & &\vdots\\
0 & 0 & \ldots & 0 & 0 & 0 & 0 & 0 & \ldots & 0
\end{pmatrix},
\end{align*}
where $j_1=\frac{(\nu_{k+1}-1)\lambda_{k+1}}{\gamma_{k+1}-\lambda_{k+2}-\mu_{k+1}}$, $j_2=1+\frac{(1-\nu_{k+1})\lambda_{k+1}}{\gamma_{k}-\lambda_{k+1}}+\frac{\lambda_{k+2}-\gamma_{k+1}}{\gamma_{k+1}-\lambda_{k+2}-\mu_{k+1}}$, $j_3=-1+\frac{\lambda_{k+2}-\gamma_{k+1}}{\gamma_{k+1}-\lambda_{k+2}-\mu_{k+1}}$, $j_4=(1-\nu_{k+1})(\frac{\lambda_{k+1}}{\gamma_{k}-\lambda_{k+1}}-\frac{\lambda_{k+1}}{\gamma_{k+1}-\lambda_{k+2}-\mu_{k+1}})$, $j_5=\frac{(\gamma_{k}-\mu_{k+1}-\nu_{k+1})\lambda_{k+1}}{\gamma_{k}-\lambda_{k+1}}+\frac{\lambda_{k+2}-\gamma_{k+1}}{\gamma_{k+1}-\lambda_{k+2}-\mu_{k+1}}$.

\end{theorem}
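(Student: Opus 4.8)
The plan is to treat \eqref{MDE PERT} as an entrywise identity between two $N\times N$ matrix functions of $t$ and to reduce it to the scalar perturbed Toda equations \eqref{PERT c_n}--\eqref{PERT lambda_n} already established in \Cref{Perturbed theorem Toda}. The first move is to simplify the right-hand side. Since $\tfrac{\mathcal{E}_n^{-1}}{2}$ stands to the \emph{left} of the correction block in $\mathcal{X}_n$ and to the \emph{right} of it in $\mathcal{Y}_n$, the factors $\mathcal{E}_n$ and $\mathcal{E}_n^{-1}$ cancel in both occurrences inside $\mathcal{E}_n\mathcal{X}_n-\mathcal{Y}_n\mathcal{E}_n$ and the two halves add up, giving
\begin{align*}
\mathcal{E}_n\mathcal{X}_n-\mathcal{Y}_n\mathcal{E}_n
&= p\bigl(\mathcal{E}_n\mathcal{L}'_n\mathcal{F}_n-\mathcal{F}_n\mathcal{L}''_n\mathcal{E}_n\bigr)
+ q\bigl(\mathcal{E}_n\mathcal{L}'_n\mathcal{G}_n-\mathcal{G}_n\mathcal{L}''_n\mathcal{E}_n\bigr)\\
&\quad + p\mathcal{H}_n+q\mathcal{J}_n.
\end{align*}
By linearity in the duo-directional parameters $p$ and $q$, it then suffices to match the coefficient of $p$ against the $p$-part, and the coefficient of $q$ against the $q$-part, of \eqref{PERT c_n} and \eqref{PERT lambda_n}. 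The left-hand side $\dot{\mathcal{E}}_n$ is read off entrywise from $\gamma_j=\lambda_{j+1}+c_j$: its strictly subdiagonal entries are the $\dot\lambda_i$, and its upper entries are $\dot\gamma_j=\dot\lambda_{j+1}+\dot c_j$.

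The second step exploits the bidiagonal structure underlying the construction. The algebraic identities $\gamma_j-\lambda_{j+1}=c_j$ and $\gamma_{k+1}-\lambda_{k+2}-\mu_{k+1}=c_{k+1}-\mu_{k+1}$ identify the nonzero entries of $\mathcal{G}_n$ with $\pm 1/c_j$ together with the perturbed reciprocal $1/(c_{k+1}-\mu_{k+1})$, while $\mathcal{F}_n$ is the lower-bidiagonal carrier of the $\lambda$'s; together with the factors $\mathcal{L}'_n,\mathcal{L}''_n$ of the perturbed Hessenberg $\mathcal{L}_n$, these let one evaluate the triple products $\mathcal{E}_n\mathcal{L}'_n\mathcal{F}_n$, $\mathcal{F}_n\mathcal{L}''_n\mathcal{E}_n$, $\mathcal{E}_n\mathcal{L}'_n\mathcal{G}_n$, $\mathcal{G}_n\mathcal{L}''_n\mathcal{E}_n$ in closed form. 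For all row/column indices outside the band $\{k,k+1,k+2\}$ the perturbation parameters $\mu_{k+1},\nu_{k+1}$ do not appear, $\mathcal{H}_n$ and $\mathcal{J}_n$ vanish there, and the computation reduces verbatim to the unperturbed situation of \cite{Bracciali ranga toda 2019}; this reproduces the ``otherwise'' lines of \eqref{PERT c_n} and \eqref{PERT lambda_n}.

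The crux is the band $\{k,k+1,k+2\}$, which by \Cref{Main Remark 1} is exactly where the perturbation is felt and where $\mathcal{H}_n,\mathcal{J}_n$ are supported. There the bulk formula for the triple products no longer reproduces the true perturbed derivatives $\dot c_k,\dot c_{k+1},\dot c_{k+2},\dot\lambda_{k+1},\dot\lambda_{k+2}$, and the assertion is precisely that the additive corrections $p\mathcal{H}_n$ and $q\mathcal{J}_n$, with the explicit entries $h_1,\dots,h_5$ and $j_1,\dots,j_5$, supply the missing pieces --- in particular the inhomogeneous contributions $\dot\mu_{k+1}$ and $\dot\nu_{k+1}$ and the factors $(\nu_{k+1}-1)\lambda_{k+1}$ recorded in \eqref{PERT c_n}--\eqref{PERT lambda_n}. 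I would verify this band row by row, matching each nonzero entry of the two sides against the appropriate one of the five perturbed equations; the symmetric factor of $\tfrac12$ is exactly what makes $\mathcal{H}_n,\mathcal{J}_n$ enter as clean additive corrections rather than as conjugated blocks.

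I expect this last step to be the main obstacle. The entries $j_2$ and $j_5$ of $\mathcal{J}_n$ mix the two reciprocals $1/(\gamma_k-\lambda_{k+1})=1/c_k$ and $1/(\gamma_{k+1}-\lambda_{k+2}-\mu_{k+1})=1/(c_{k+1}-\mu_{k+1})$, so isolating which product entry lands where, and checking that the cross terms combine exactly, demands careful bookkeeping across adjacent rows. A running consistency check keeps the computation honest: setting $\mu_{k+1}=0$ and $\nu_{k+1}=1$ forces $\mathcal{H}_n=\mathcal{J}_n=0$ and $\mathcal{L}_n=\mathcal{E}_n$, collapsing \eqref{MDE PERT} to the known unperturbed Lax representation of \cite{Bracciali ranga toda 2019}, which both validates the band computation and recovers the Lax pair advertised in the statement.
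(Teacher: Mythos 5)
Your proposal is correct and takes essentially the same approach as the paper: both exploit the factorization $\mathcal{L}_n=\mathcal{E}_n\mathcal{L}'_n=\mathcal{L}''_n\mathcal{E}_n$ and the cancellation of $\mathcal{E}_n^{\pm 1}$ (the two half-corrections recombining into $p\mathcal{H}_n+q\mathcal{J}_n$) to reduce \eqref{MDE PERT} to the entrywise identity $\mathcal{\dot{E}}_n = p(\mathcal{L}_n\mathcal{F}_n-\mathcal{F}_n\mathcal{L}_n)+q(\mathcal{L}_n\mathcal{G}_n-\mathcal{G}_n\mathcal{L}_n)+p\mathcal{H}_n+q\mathcal{J}_n$, which is then checked against the perturbed Toda equations of \Cref{Perturbed theorem Toda}. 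The paper's verification of that identity --- computing $\dot{\gamma}_{k},\dot{\gamma}_{k+1},\dot{\gamma}_{k+2}$ from \eqref{PERT c_n}--\eqref{PERT lambda_n} while the remaining entries follow the unperturbed case of \cite{Bracciali ranga toda 2019} --- is precisely your bulk-versus-band check in different words.
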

\begin{proof} Note that, in the hypothesis, $\mathcal{L}_n=\mathcal{E}_n\mathcal{L}'_n=\mathcal{L}''_n\mathcal{E}_n$, where the matrices $\mathcal{L}'_n$ and $\mathcal{L}''_n$ can be computed by pre and post-multiplying $\mathcal{E}^{-1}_n$ in $\mathcal{L}_n$.  At first, we compute $\dot{\gamma}_{k}(t)=\dot{\lambda}_{k+1}(t)+\dot{c}_k(t)$, $\dot{\gamma}_{k+1}(t)=\dot{\lambda}_{k+2}(t)+\dot{c}_{k+1}(t)$ and $\dot{\gamma}_{k+2}(t)=\dot{\lambda}_{k+3}(t)+\dot{c}_{k+2}(t)$ using \eqref{PERT c_n} and \eqref{PERT lambda_n}. Now, $\mathcal{\dot{E}}_n$ is obtained as
\begin{align*}
\mathcal{\dot{E}}_n&= p(\mathcal{L}_n\mathcal{F}_n-\mathcal{F}_n\mathcal{L}_n)+q(\mathcal{L}_n\mathcal{G}_n-\mathcal{G}_n\mathcal{L}_n)+ p\mathcal{H}_n+q\mathcal{J}_n\\
&= p(\mathcal{E}_n\mathcal{L}'_n\mathcal{F}_n-\mathcal{F}_n\mathcal{L}''_n\mathcal{E}_n)+ q(\mathcal{E}_n\mathcal{L}'_n\mathcal{G}_n-\mathcal{G}_n\mathcal{L}''_n\mathcal{E}_n)+ p\mathcal{H}_n+q\mathcal{J}_n\\
&=\mathcal{E}_n(p\mathcal{L}'_n\mathcal{F}_n+q\mathcal{L}'_n\mathcal{G}_n)-(p\mathcal{F}_n\mathcal{L}''_n+q\mathcal{G}_n\mathcal{L}''_n)\mathcal{E}_n+ p\mathcal{H}_n+q\mathcal{J}_n
\end{align*}
which gives \eqref{MDE PERT} and the proof is complete.
\end{proof}
The study of matrix differential equations of the form \eqref{MDE PERT} is available in \cite{walter toda laurent 2002}. With $\mu_{k+1}=0$ and $\nu_{k+1}=1$, we get $\mathcal{L}_n=\mathcal{E}_n$, $\mathcal{J}_n=\mathcal{H}_n=\mathcal{O}_n$, $\mathcal{L}'_n=\mathcal{L}''_n=\mathcal{I}_n$, where $\mathcal{O}_n$ and $\mathcal{I}_n$ are the Zero matrix and Identity matrices, respectively. Then, \eqref{MDE PERT} becomes
\begin{align*}
	\mathcal{\dot{E}}_n=\mathcal{E}_n\mathcal{X}_n-\mathcal{X}_n\mathcal{E}_n,
\end{align*}
where $\mathcal{X}_n=p\mathcal{F}'_n+q\mathcal{G}'_n$, where $\mathcal{F}'_n$ and $\mathcal{G}'_n$ are the matrices $\mathcal{F}_n$ and $\mathcal{G}_n$ when $\mu_{k+1}=0$ and $\nu_{k+1}=1$. Thus, we have recovered the pair $[\mathcal{E}_n, \mathcal{X}_n]$ which is the Lax pair for extended relativistic Toda equations of finite order derived in \cite[Theorem 3]{Bracciali ranga toda 2019}.

\section{Stability of pertrbed system via numerical experiments}\label{Stability}
A system is said to be stable if perturbations to the system's initial state do not cause the system to diverge or become unpredictable over time. In other words, a stable system will return to its equilibrium state after being perturbed. The exploration of stability analysis holds significant research interest across various domains, including difference equations \cite{Atia Finkelshtein 2014}, differential equations \cite{Chen Srivastava 2023}, fluid mechanics \cite{Malik 2022}, etc. In this section, we make an attempt to establish connections between the methodologies developed in these fields and the framework of orthogonal polynomials.

Let
\begin{align*}
	c_n(t) = \sqrt{q} \quad {\rm and } \quad \lambda_n(t) = \dfrac{n-1}{2(t+d)}, \quad n \geq 2,
\end{align*}
be the recurrence coefficients of three term recurrence relation \eqref{R_n Toda} satisfied by $L$-orthogonal polynomials orthogonal with respect to measure $d\psi(x)= x^{-1/2}e^{-(t+d)(x+\frac{q}{x})}$, so that the Toda equations are found to be \cite{Bracciali ranga toda 2019}
\begin{align}\label{Unperturbed Toda equations}
	\dot{c}_n(t) = 0, \qquad \dot{\lambda}_n(t) = -\dfrac{n-1}{2(t+d)^2}, \quad d > 0, \quad n \geq 2.
\end{align}
These equations gets transformed according to the procedure outlined in \Cref{Perturbed theorem Toda} upon the recurrence coefficients' perturbation being introduced in the corresponding three-term recurrence relation \eqref{R_n Toda perturbed}. In this section, we delve into the analysis of the stability of the resulting system through graphical illustrations and numerical simulations, considering various choices of perturbations $\mu_k(t)$ and $\nu_k(t)$. These perturbations, being a function of $t$ where $t \geq 0$, can be either monotonically increasing (M.I.) or monotonically decreasing (M.D.) or oscillating (Osc.). As elucidated in \Cref{Main Remark 1}, the modification induced by perturbation occurs solely in $\dot{c}_k(t)$, $\dot{c}_{k+1}(t)$, $\dot{c}_{k+2}(t)$, $\dot{\lambda}_{k+1}(t)$ and $\dot{\lambda}_{k+2}(t)$ for $n=k$. For numerical experimentation, the perturbation is introduced at initial level $k=2$ and by virtue of \Cref{Perturbed theorem Toda}, we get
\begin{align*}
&\dot{c}_{2}(t)=\left(\dfrac{\sqrt{q}}{2(t+d)}-\dfrac{\nu_{3}(t)}{t+d}\right)+q\left(\dfrac{\nu_{3}(t)}{(\sqrt{q}-\mu_{3}(t))(t+d)}-\dfrac{1}{2\sqrt{q}(t+d)}\right), \\
&\dot{c}_{3}(t)=\dot{\mu}_{3}(t)+(\sqrt{q}-\mu_{3}(t))\left(\dfrac{\nu_{3}(t)}{t+d}-\dfrac{3}{2(t+d)}\right)+\sqrt{q}\left(\dfrac{3}{2(t+d)}-\dfrac{\nu_{3}(t)}{(t+d)} \right), \\
&\dot{c}_{4}(t)=-\dfrac{\sqrt{q}}{2(t+d)}+q\left(\dfrac{2}{\sqrt{q}(t+d)}-\dfrac{3}{(\sqrt{q}-\mu_{3}(t))(t+d)} \right), \\
& \dfrac{\dot{\lambda}_{3}(t)}{{\lambda}_{3}(t)}=-\dfrac{\dot{\nu}_{3}(t)}{{\nu}_{3}(t)}- \left(\dfrac{1}{t+d}+\sqrt{q}-\mu_{3}(t)+\dfrac{\nu_{3}(t)-1}{t+d}-\sqrt{q}\right)-q\left(\dfrac{1}{\sqrt{q}-\mu_{3}(t)}-\dfrac{1}{\sqrt{q}} \right),\\
&\dfrac{\dot{\lambda}_{4}(t)}{{\lambda}_{4}(t)}=-\left(\dfrac{2}{t+d}+\mu_{3}(t)-\dfrac{\nu_{3}(t)}{t+d}\right)-q\left(\dfrac{1}{\sqrt{q}}-\dfrac{1}{\sqrt{q}-\mu_{3}(t)} \right).
\end{align*}
For enhanced visual comprehension, the graphs exclusively depicting these equations are generated. Functions such as $e^t$, $e^{-t}$, $\sin t$, degree-one, and quadratic polynomials are employed as choices for $\mu_k(t)$ and $\nu_k(t)$. In the ensuing figures, "D$f(t)$" signifies $\dot{f}(t)$.

The unperturbed system corresponding to $\dot{c}_n(t)$ is represented by $x$-axis, while that corresponding to $\dot{\lambda}_n(t)$ is depicted in \Cref{6}(a), as obtained from \eqref{Unperturbed Toda equations}. All computations and graph plotting are executed using MATLAB$^{{\mbox{\tiny \textcircled{R}}}}$, utilizing an Intel Core i3-6006U CPU @ 2.00 GHz and 8 GB of RAM.

\subsection{Numerical Illustrations}
\begin{enumerate}
\item [1.] When $\mu_k(t)$ is monotonically decreasing and $\nu_k(t)$ be either monotonically increasing or decreasing or oscillating, the system for perturbed $\dot{c}_n(t)$ initially shows diversion from the original state but becomes stable as $t$ increases (see \Cref{1}(a), \Cref{1}(b), and \Cref{2}(a)). In other words, for any choice of $\nu_k(t)$, if $\mu_k(t)$ is monotonically decreasing, then the system attains a stable state eventually.

\begin{figure}[htb!]
	\centering
	\subfigure[]{\includegraphics[scale=0.5]{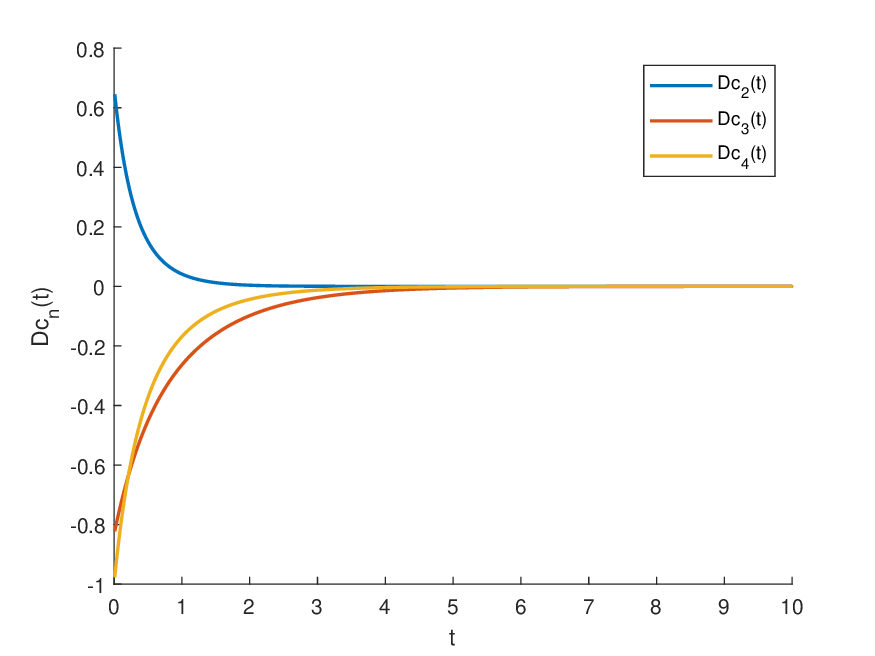}}
	\subfigure[]{\includegraphics[scale=0.5]{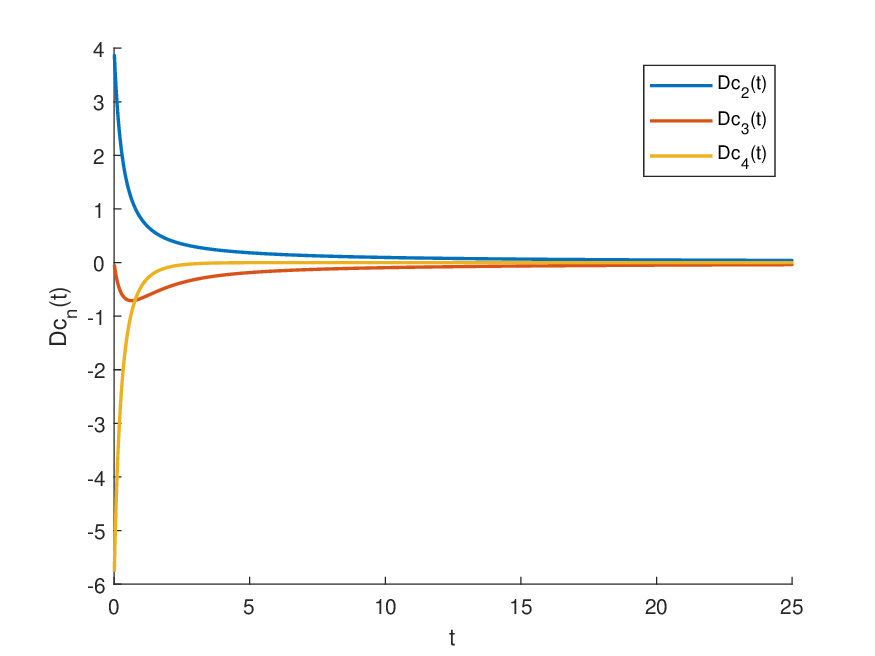}}
	\caption{(a) $\dot{c}_n(t)$ for $n=2,3,4$ when both $\mu_k(t)$ and $\nu_k(t)$ are M.D. (b) $\dot{c}_n(t)$ for $n=2,3,4$ when $\mu_k(t)$ is M.D. and $\nu_k(t)$ is M.I.}\label{1}
\end{figure}

\begin{figure}[htb!]
	\centering
	\subfigure[]{\includegraphics[scale=0.5]{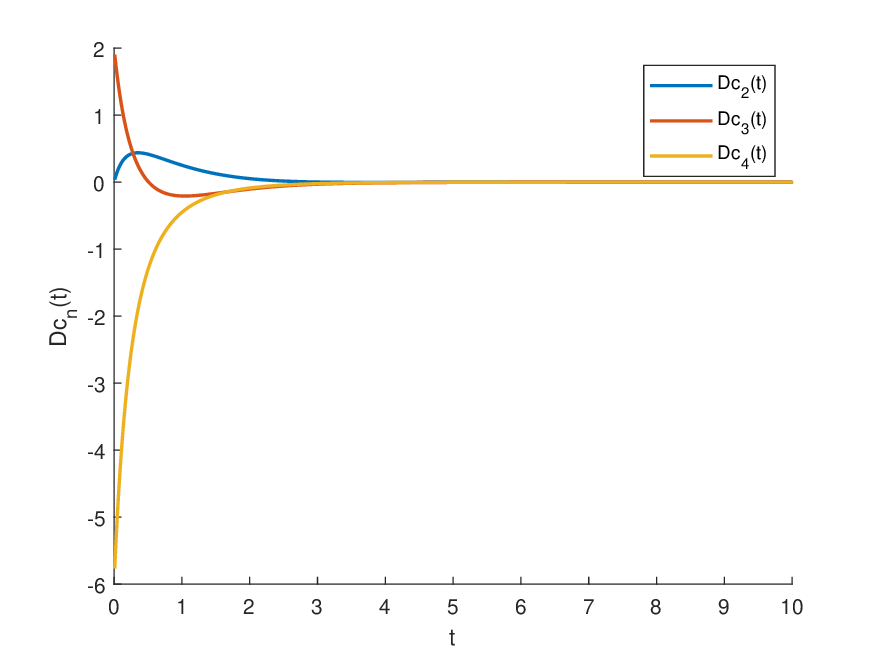}}
	\subfigure[]{\includegraphics[scale=0.5]{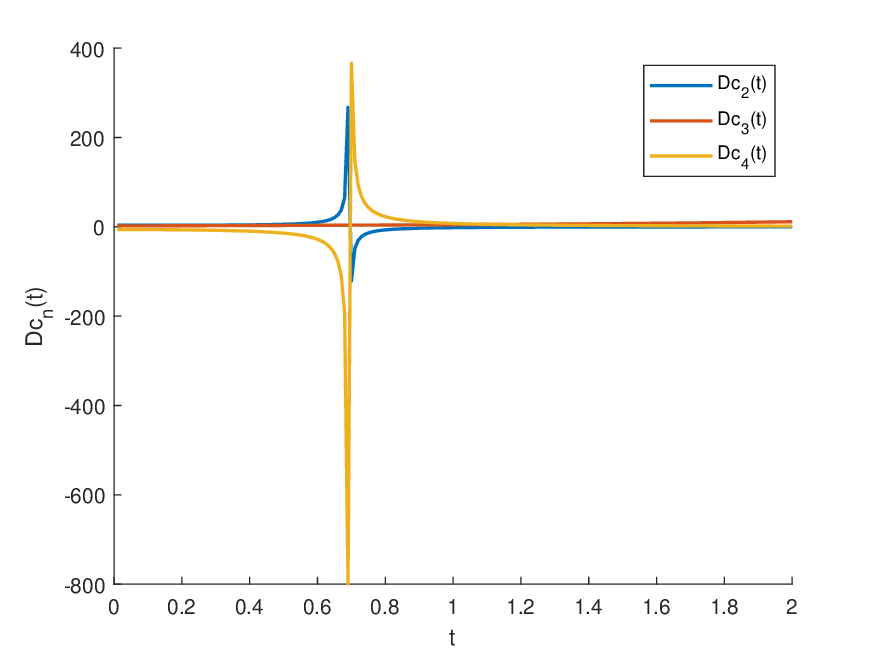}}
	\caption{(a) $\dot{c}_n(t)$ for $n=2,3,4$ when $\mu_k(t)$ is M.D. and $\nu_k(t)$ is Osc.  (b) $\dot{c}_n(t)$ for $n=2,3,4$ when $\mu_k(t)$ is M.I. and $\nu_k(t)$ is M.D.}\label{2}
\end{figure}

\begin{figure}[htb!]
	\centering
	\subfigure[]{\includegraphics[scale=0.5]{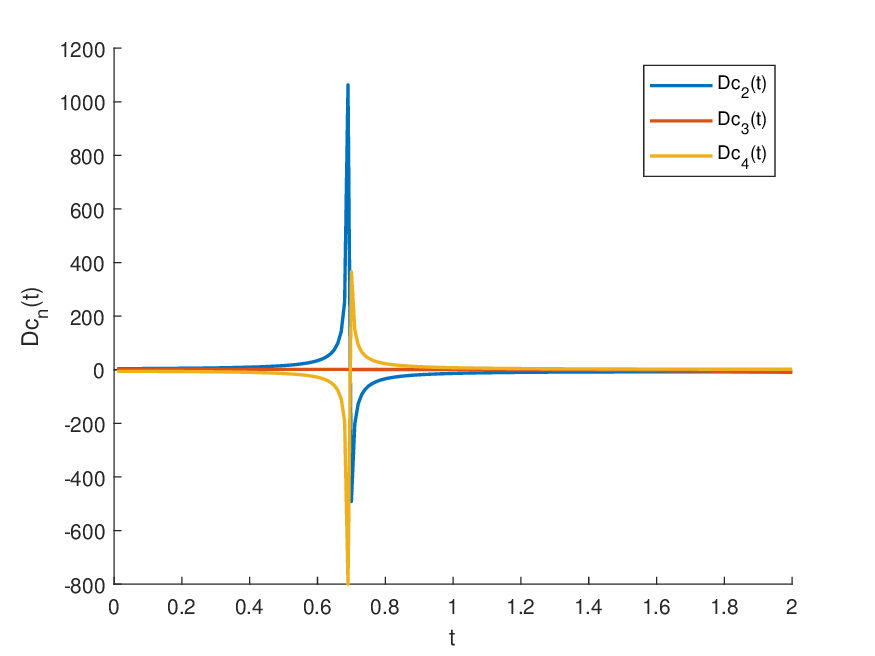}}
	\subfigure[]{\includegraphics[scale=0.5]{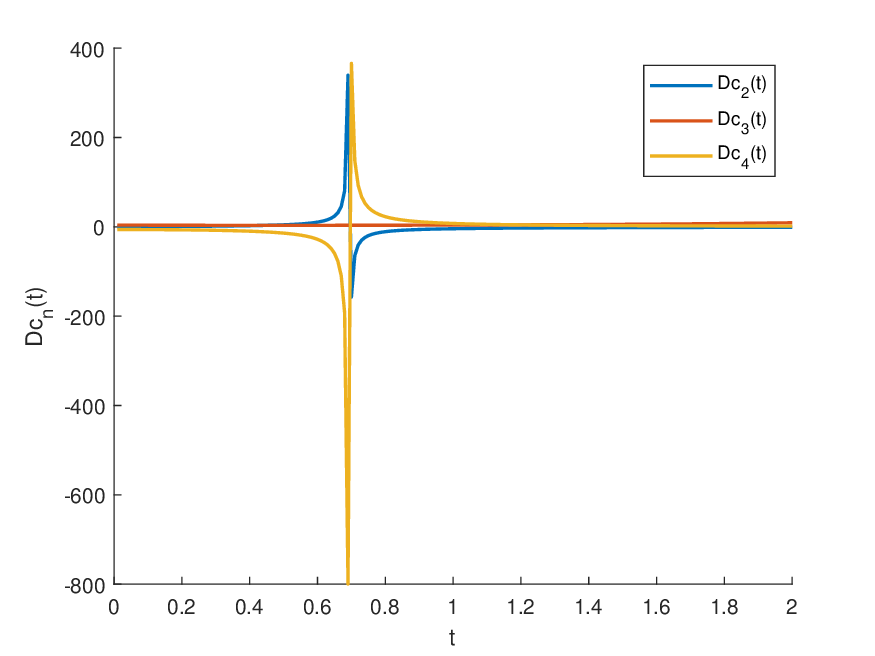}}
	\caption{(a) $\dot{c}_n(t)$ for $n=2,3,4$ when both $\mu_k(t)$ and $\nu_k(t)$ are M.I. ~(b) $\dot{c}_n(t)$ for $n=2,3,4$ when $\mu_k(t)$ is M.I. and $\nu_k(t)$ is Osc.}\label{3}
\end{figure}
\item [2.] \label{Note 2} When $\mu_k(t)$ is monotonically increasing, then for any choice of $\nu_k(t)$, the system for perturbed $\dot{c}_n(t)$ initially appears to be stable, but then experiences a sudden disturbance or perturbation that causes it to become temporarily unstable. However, the system eventually recovers from this disturbance and returns to a stable state (see \Cref{2}(b), \Cref{3}(a), and \Cref{3}(b)). The phenomenon is commonly known as {\it transient instability} or {\it transient instability followed by recovery to stability} \cite{Franklin Powell book,Nise book}.

\item [3.] When $\mu_k(t)$ is oscillating and $\nu_k(t)$ is either monotonically decreasing or oscillating, the system for perturbed $\dot{c}_n(t)$ experiences instability that leads to sustained oscillations without converging to a stable state whatsoever choice of $d$ and $t$ one makes. The oscillations persist indefinitely, and the system does not return to a stable condition (see \Cref{4}(a) and \Cref{4}(b)). The phenomenon is often referred to as {\it persistent oscillation} or {\it sustained oscillation} \cite{Franklin Powell book,Nise book}.

\begin{figure}[htb!]
	\centering
	\subfigure[]{\includegraphics[scale=0.5]{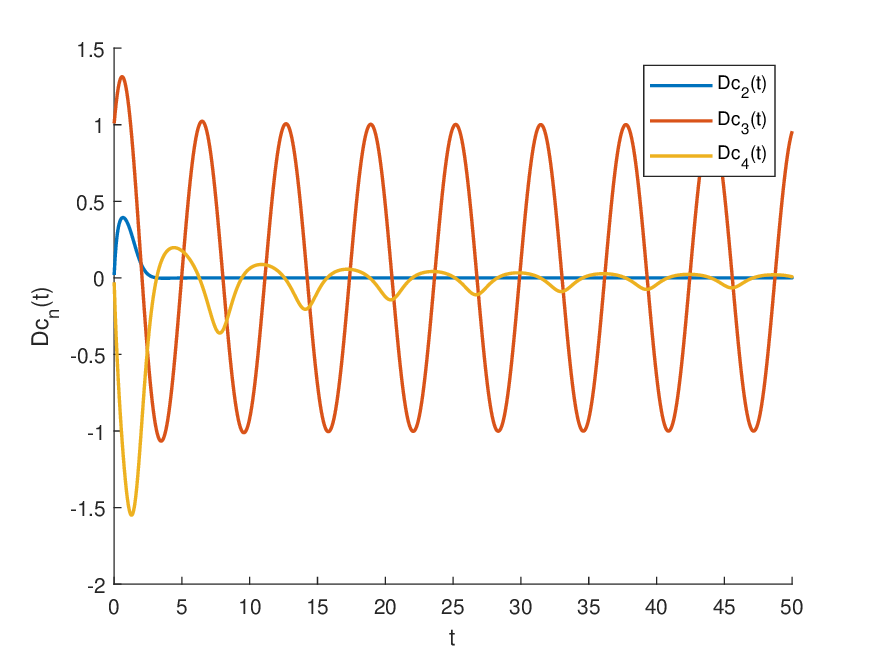}}
	\subfigure[]{\includegraphics[scale=0.5]{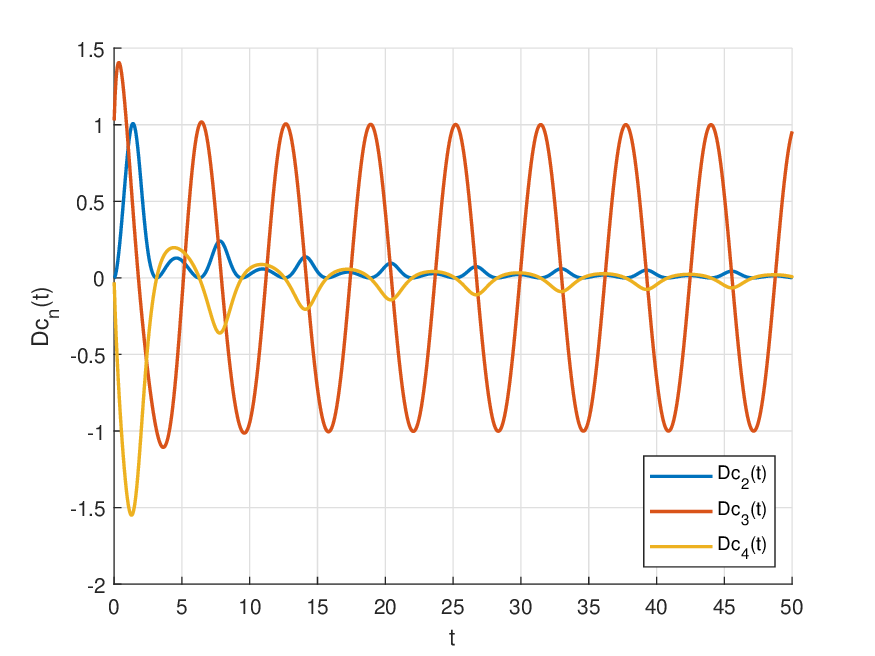}}
	\caption{(a) $\dot{c}_n(t)$ for $n=2,3,4$ when $\mu_k(t)$ is Osc. and $\nu_k(t)$ is M.D. ~(b) $\dot{c}_n(t)$ for $n=2,3,4$ when both $\mu_k(t)$ and $\nu_k(t)$ are Osc.}\label{4}
\end{figure}
\item [4.] For $\mu_k(t)$ oscillating and $\nu_k(t)$ monotonically increasing, the system for perturbed $\dot{c}_n(t)$ initially exhibits stability, but as $t$ increases, it becomes increasingly unstable, and the amplitude of oscillations grows continuously, eventually tending to infinity (see \Cref{5}). The phenomenon you're describing is commonly referred to as {\it instability with growing amplitude} or {\it exponential instability} \cite{Franklin Powell book,Nise book}. This is one of the case in which the stability can be controlled by varying the parameter $d$ (upto certain $t$).

\begin{figure}[htb!]
	\centering
	{\includegraphics[scale=0.5]{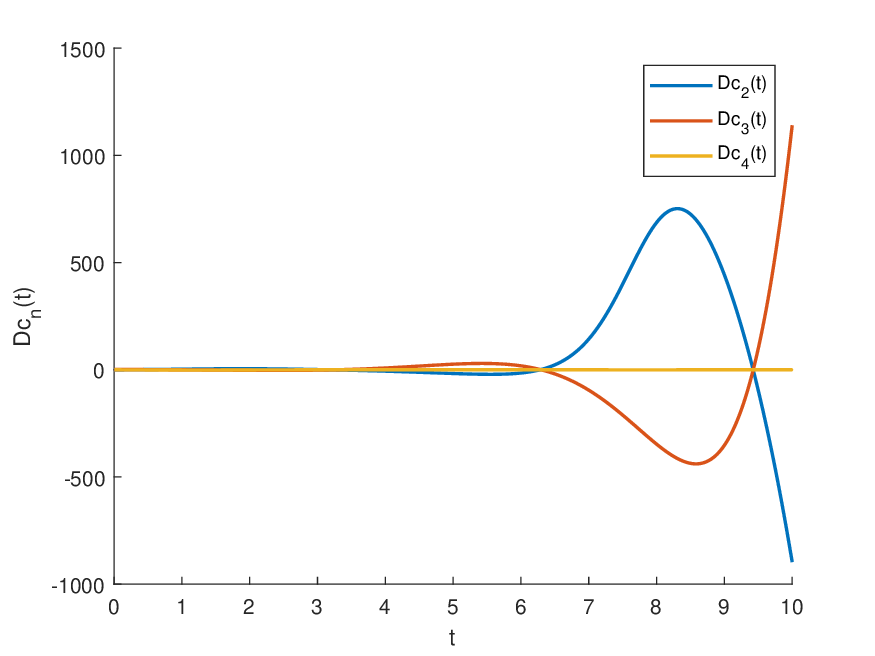}}
	\caption{ $\dot{c}_n(t)$ for $n=2,3,4$ when $\mu_k(t)$ is Osc. and $\nu_k(t)$ is M.I. }\label{5}
\end{figure}

\item [5.] When both $\mu_k(t)$ and $\nu_k(t)$ are monotonically decreasing, the system for perturbed $\dot{\lambda}_n(t)$ exhibits {\it transient response} where the system initially deviates from its original state but eventually stabilizes as $t$ increases (see \Cref{6}(b)).

\begin{figure}[htb!]
	\centering
	\subfigure[]{\includegraphics[scale=0.5]{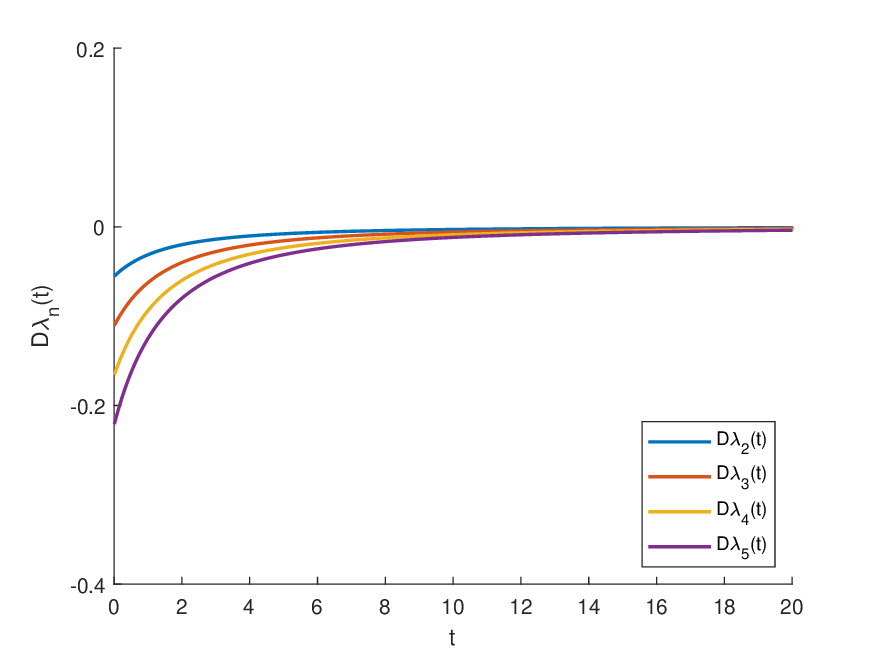}}
	\subfigure[]{\includegraphics[scale=0.5]{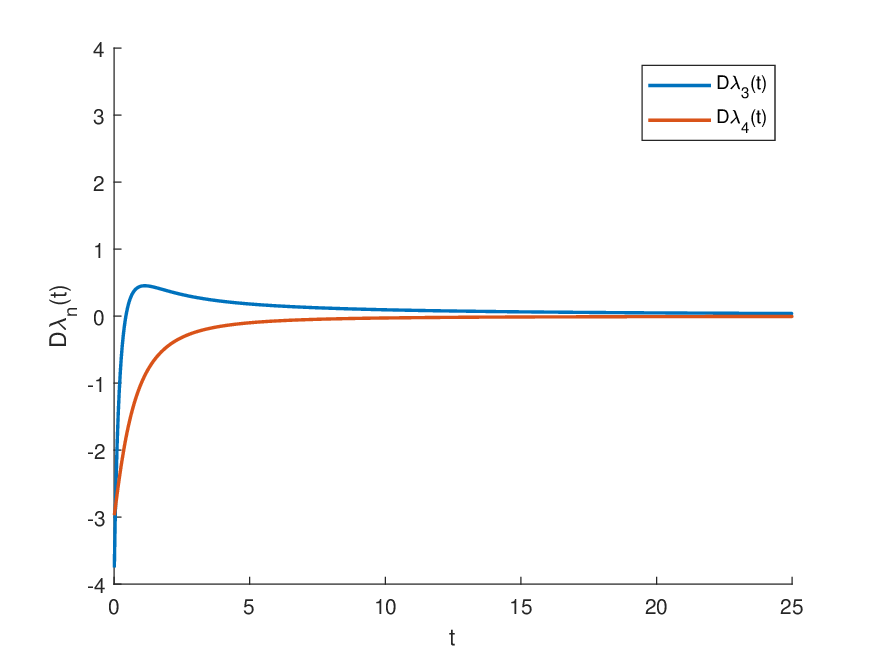}}
	\caption{(a) $\dot{\lambda}_n(t)$ for $n=2,3,4,5$ without any perturbation ~(b) $\dot{\lambda}_n(t)$ for $n=3,4$ when both $\mu_k(t)$ and $\nu_k(t)$ are M.D.}\label{6}
\end{figure}

\begin{figure}[htb!]
	\centering
	\subfigure[]{\includegraphics[scale=0.5]{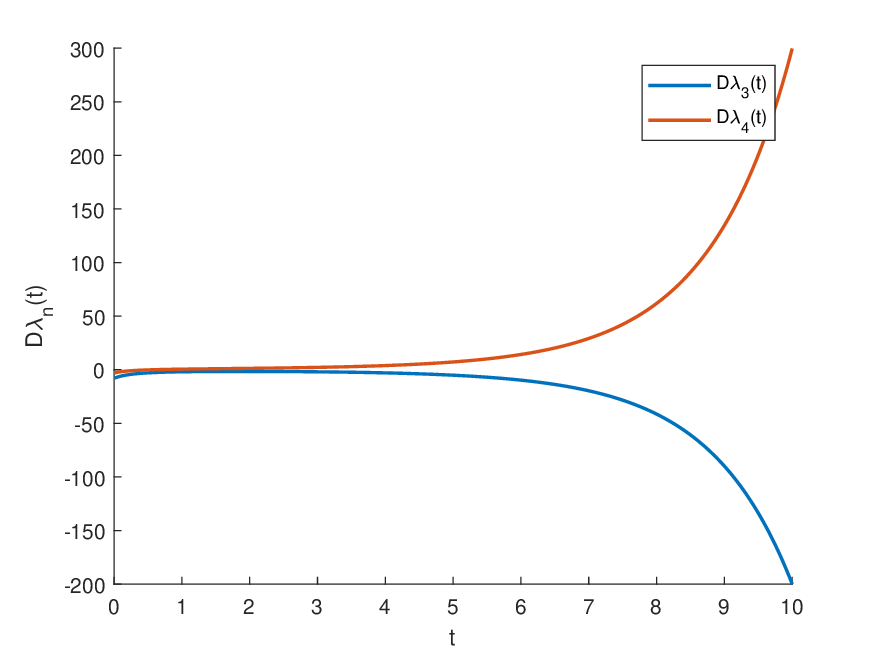}}
	\subfigure[]{\includegraphics[scale=0.5]{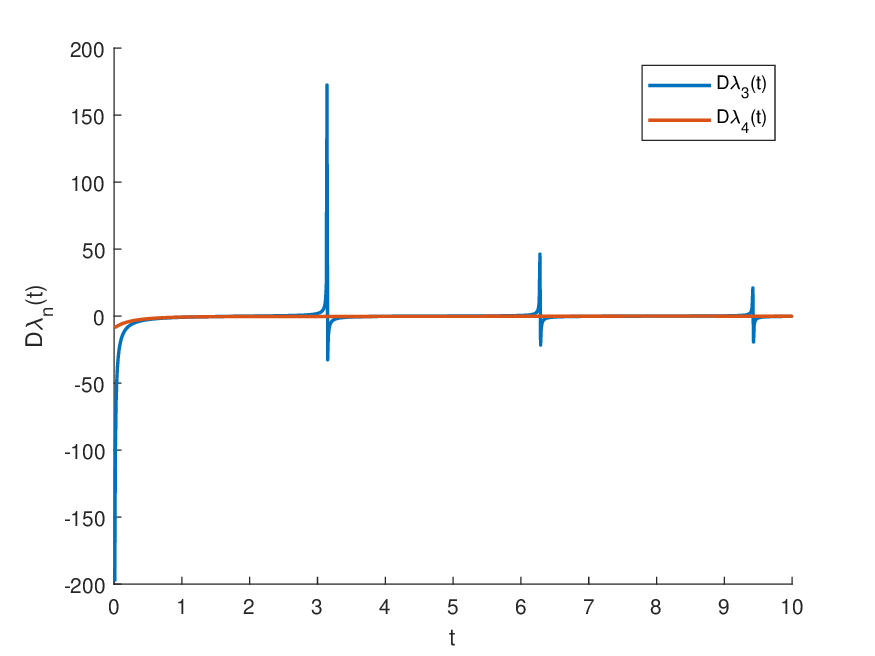}}
	\caption{(a) $\dot{\lambda}_n(t)$ for $n=3,4$ when $\mu_k(t)$ is M.D. and $\nu_k(t)$ is M.I. ~(b) $\dot{\lambda}_n(t)$ for $n=3,4$ when $\mu_k(t)$ is M.D. and $\nu_k(t)$ is Osc. }\label{7}
\end{figure}

\begin{figure}[htb!]
	\centering
	\subfigure[]{\includegraphics[scale=0.5]{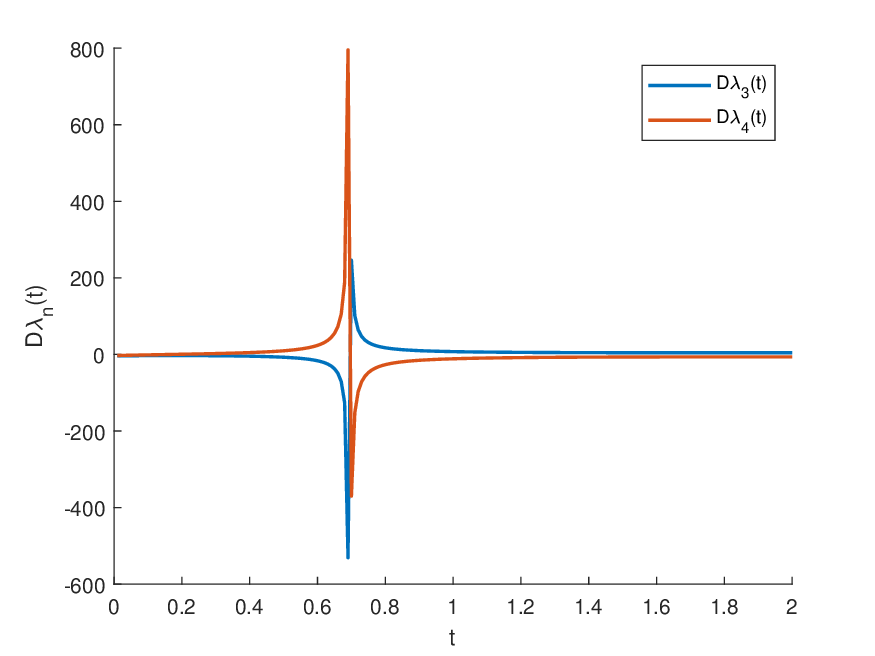}}
	\subfigure[]{\includegraphics[scale=0.5]{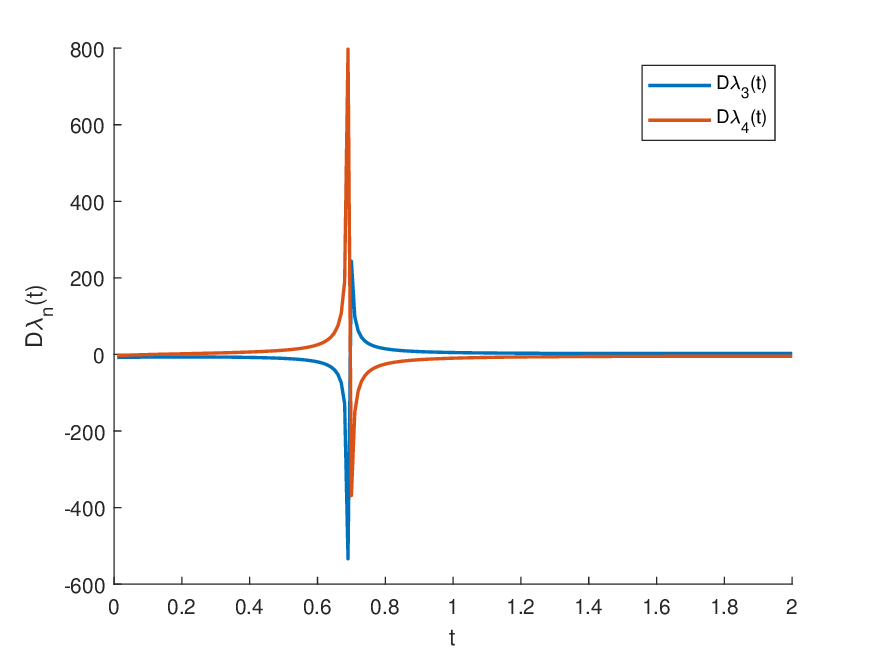}}
	\caption{(a) $\dot{\lambda}_n(t)$ for $n=3,4$ when $\mu_k(t)$ is M.I. and $\nu_k(t)$ is M.D. ~(b) $\dot{\lambda}_n(t)$ for $n=3,4$ when both $\mu_k(t)$ and $\nu_k(t)$ are M.I.}\label{8}
\end{figure}

\begin{figure}[htb!]
	\centering
	\subfigure[]{\includegraphics[scale=0.5]{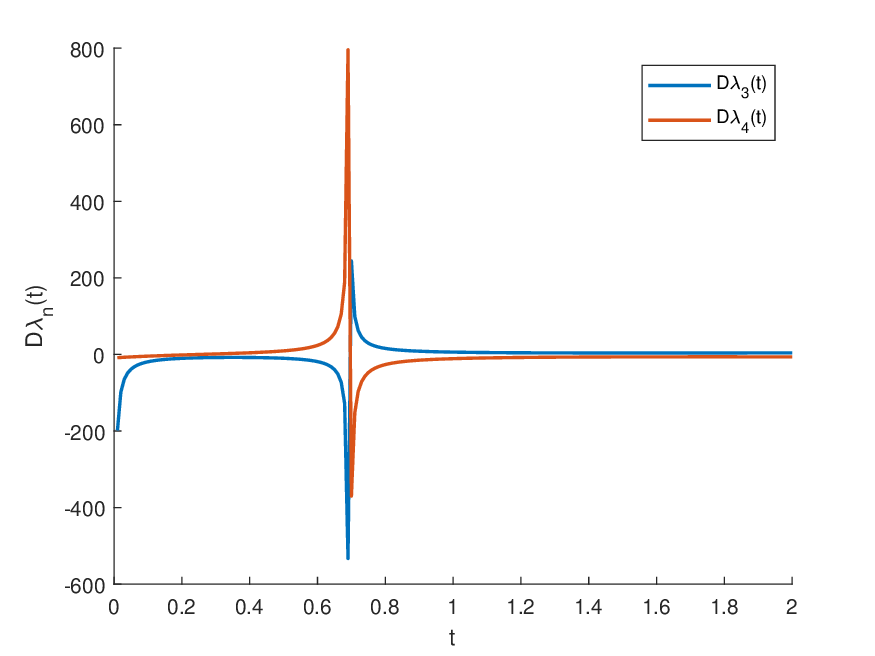}}
	\subfigure[]{\includegraphics[scale=0.5]{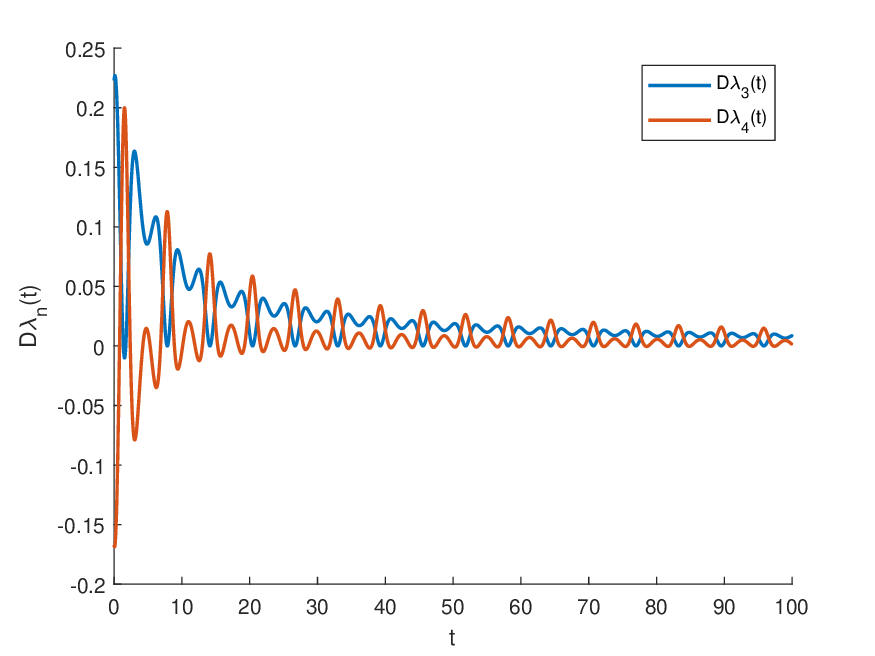}}
	\caption{(a) $\dot{\lambda}_n(t)$ for $n=3,4$ when $\mu_k(t)$ is M.I. and $\nu_k(t)$ is Osc. ~(b) $\dot{\lambda}_n(t)$ for $n=3,4$ when $\mu_k(t)$ is Osc. and $\nu_k(t)$ is M.D.}\label{9}
\end{figure}

\begin{figure}[htb!]
	\centering
	\subfigure[]{\includegraphics[scale=0.5]{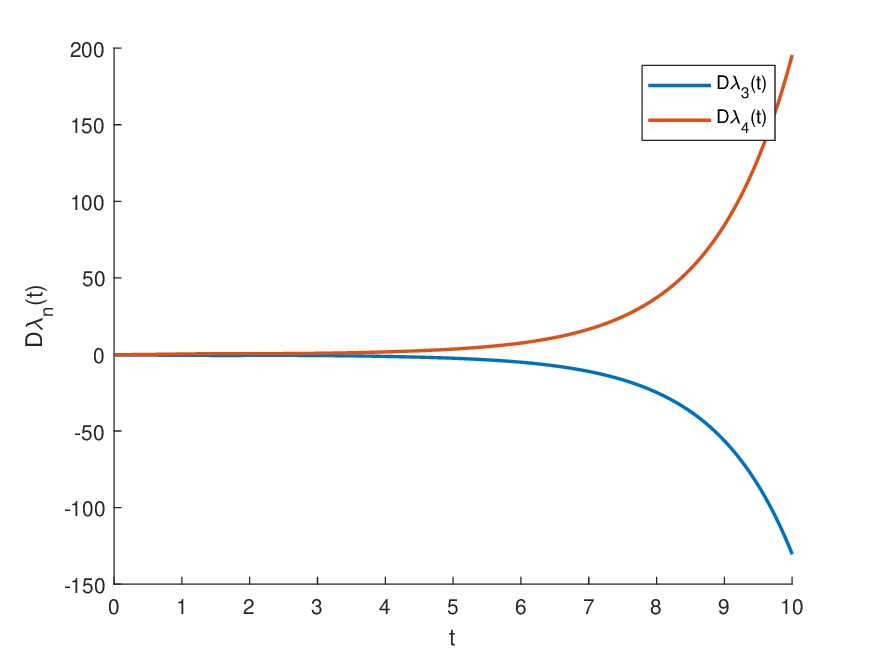}}
	\subfigure[]{\includegraphics[scale=0.5]{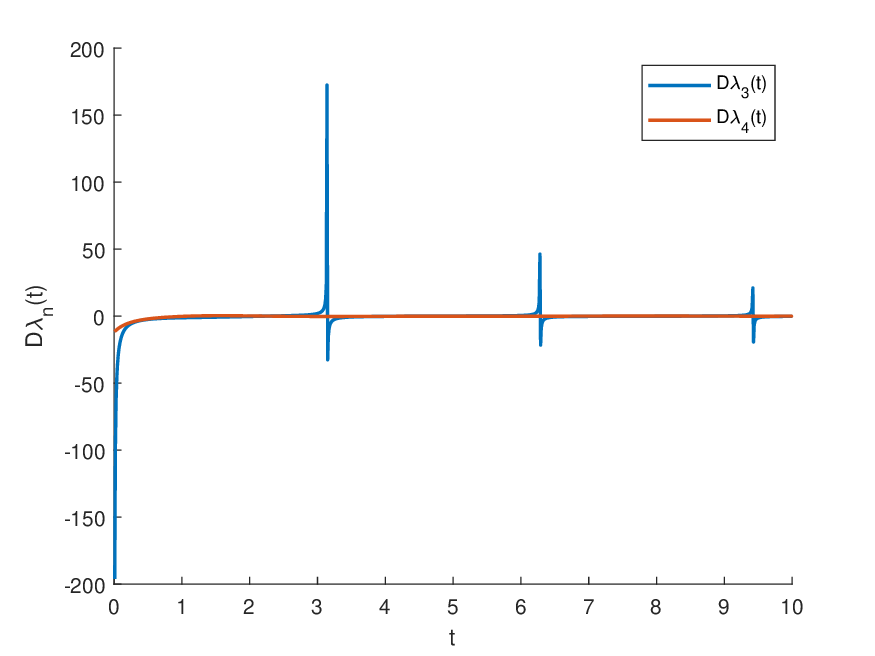}}
	\caption{(a) $\dot{\lambda}_n(t)$ for $n=3,4$ when $\mu_k(t)$ is Osc. and $\nu_k(t)$ is M.I. ~(b) $\dot{\lambda}_n(t)$ for $n=3,4$ when both $\mu_k(t)$ and $\nu_k(t)$ are Osc.}\label{10}
\end{figure}

\item [6.] When $\mu_k(t)$ is either monotonically decreasing or oscillaing but $\nu_k(t)$ monotonically increasing, the system for perturbed $\dot{\lambda}_n(t)$ remains stable for some time but then becomes unstable, but in this case, no oscillations are present (see \Cref{7}(a) and see \Cref{10}(a)), contrary to \Cref{5}. These are also the cases in which the stability upto certain $t$ can be controlled by varying the parameter $d$.
\item [7.] When $\mu_k(t)$ is either monotonically decreasing or oscillaing and $\nu_k(t)$ is also oscillating, the system's (perturbed $\dot{\lambda}_n(t)$) behavior alternates between stable periods and episodes of instability characterized by sharp spikes or disturbances (see \Cref{7}(b) and see \Cref{10}(b)). The intermittent nature of the instability suggests that there may be periodic factors or conditions influencing the system's dynamics, leading to the observed pattern of behavior. The phenomenon is often referred to as {\it intermittent instability} or {\it periodic instability} \cite{Franklin Powell book,Nise book}.
\item [8.] When $\mu_k(t)$ is monotonically increasing, then for any choice of $\nu_k(t)$, the system for perturbed $\dot{\lambda}_n(t)$ exhibits {\it transient instability} \cite{Franklin Powell book,Nise book} discussed in Note 2 (see \Cref{8}(a), \Cref{8}(b), and \Cref{9}(a)).
\item [9.] When $\mu_k(t)$ is oscillaing and $\nu_k(t)$ is monotonically decreasing, the (perturbed $\dot{\lambda}_n(t)$) system's response initially exhibits oscillatory behavior due to the disturbances, but over time, the amplitude of the oscillations decreases until the system settles into a stable state (see \Cref{9}(b)). The damping effect gradually reduces the magnitude of the disturbances until they no longer significantly affect the system's behavior, allowing it to achieve stability. The phenomenon is called {\it damped instability} \cite{Franklin Powell book,Nise book}.
\end{enumerate}
These numerical illustrations provide the advantage of understanding the type of stability (instability) that may occur at specific values, which in turn will be useful in handling the corresponding physical system effectively. On the other hand, a strong theoretical approach needs to be developed in obtaining the range of parameters involved in the stability behaviour of the physical system given by \Cref{Perturbed theorem Toda}.


\begin{thebibliography}{55}
	
\bibitem{Atia Finkelshtein 2014}
M.~J. Atia, A. Martínez-Finkelshtein, P. Martínez-González\ and\ F. Thabet, Quadratic differentials and asymptotics of Laguerre polynomials with varying complex parameters, J. Math. Anal. Appl. {\bf 416} (2014), no.~1, 52--80.

	

\bibitem{Bracciali ranga toda 2019}
C. F. Bracciali, J. S. Silva\ and\ A. Sri Ranga, Extended relativistic Toda lattice, L-orthogonal polynomials and associated Lax pair, Acta Appl. Math. {\bf 164} (2019), 137--154.





\bibitem{Castillo costa ranga veronese Favard theorem 2014}
K. Castillo, M.S. Costa, A. Sri Ranga\ and\ D.O. Veronese, A Favard type theorem for orthogonal polynomials on the unit circle from a three term recurrence formula, J. Approx. Theory {\bf 184} (2014), 146--162.

\bibitem{Castillo co-polynomials on real line 2015}
K. Castillo, F. Marcell\'{a}n\ and\ J. Rivero, On co-polynomials on the real line, J. Math. Anal. Appl. {\bf 427} (2015), no.~1, 469--483.




\bibitem{chihara PAMS 1957}
T. S. Chihara, On co-recursive orthogonal polynomials, Proc. Amer. Math. Soc. {\bf 8} (1957), 899--905.

\bibitem{Chen Srivastava 2023}
M. Chen\ and\ H.~M. Srivastava, Existence and stability of bifurcating solution of a chemotaxis model, Proc. Amer. Math. Soc. {\bf 151} (2023), no.~11, 4735--4749.

\bibitem{Costa felix ranga 2013}
M. S. Costa, H. M. Felix\ and\ A. Sri Ranga, Orthogonal polynomials on the unit circle and chain sequences, J. Approx. Theory {\bf 173} (2013), 14--32.

\bibitem{walter toda laurent 2002}
J. Coussement, A. B. J. Kuijlaars\ and\ W. Van Assche, Direct and inverse spectral transform for the relativistic Toda lattice and the connection with Laurent orthogonal polynomials, Inverse Problems {\bf 18} (2002), no.~3, 923--942.

\bibitem{Deano Morey 2024}
A. Dea\~{n}o, L. Morey\ and\ P. Rom\'{a}n, Non-Abelian Toda-type equations and matrix valued orthogonal polynomials, Proc. Amer. Math. Soc. {\bf 152} (2024), no.~4, 1613--1632.

\bibitem{Deift book 1999}
P. A. Deift, {\it Orthogonal polynomials and random matrices: a Riemann-Hilbert approach}, Courant Lecture Notes in Mathematics, 3, New York University, Courant Institute of Mathematical Sciences, New York, 1999.

\bibitem{Fern Manas 2024}
I. Fern\'{a}ndez-Irisarri\ and\ M. Ma\~{n}as, Toda and Laguerre-Freud equations and tau functions for hypergeometric discrete multiple orthogonal polynomials, Anal. Math. Phys. {\bf 14} (2024), no.~2, Paper No. 30, 43 pp.

\bibitem{Dini thesis 1988}
J. Dini, {\it Sur les formes lin\'eaires et les polyno$\hat{o}$mes orthogonaux de Laguerre-Hahn}, Th\'ese de Doctorat, Universit\'e Pierre et Marie Curie, Paris, 1988.

\bibitem{Franklin Powell book}
G. F. Franklin, J. D. Powell, A. Emami-Naeini\ and\ J. D. Powell, {\it Feedback control of dynamic systems}, 4, Upper Saddle River: Prentice hall, 2002.

\bibitem{Esmail book}
M. E. H. Ismail, {\it Classical and Quantum Orthogonal Polynomials in One Variable}, Encyclopedia of Mathematics and its Applications, 98, Cambridge University Press, Cambridge, 2005.

\bibitem{Esmail masson JAT 1995}
M. E. H. Ismail\ and\ D. R. Masson, Generalized orthogonality and continued fractions, J. Approx. Theory {\bf 83} (1995), no.~1, 1--40.





\bibitem{Kim stanton 2021}
J.S. Kim\ and\ D. Stanton, Combinatorics of orthogonal polynomials of type $R_I$, Ramanujan J., (2021)



\bibitem{Paco perturbed recurrence 1990}
F. Marcell\'{a}n, J. S. Dehesa\ and\ A. Ronveaux, On orthogonal polynomials with perturbed recurrence relations, J. Comput. Appl. Math. {\bf 30} (1990), no.~2, 203--212.

\bibitem{Nakamura 2004}
Y. Nakamura, A new approach to numerical algorithms in terms of integrable systems, Proceedings of the International Conference on Informatics Research for Development of Knowledge Society Infrastructure (2004), pp. 194–205.

\bibitem{Nise book}
N. S. Nise, {\it Control systems engineering}, John Wiley \& Sons, 2020.




\bibitem{swami vinay BMMS 2023}
V. Shukla\ and\ A. Swaminathan, Spectral transformation associated with a perturbed $R_I$ type recurrence relation, Bull. Malays. Math. Sci. Soc. {\bf 46} (2023), no.~5, Paper No. 169, 29 pp.

\bibitem{Simon part1 2005}
B. Simon, {\it Orthogonal Polynomials on the Unit Circle. Part 1: Classical Theory}, American Mathematical Society Colloquium Publications, 54, Part 1, American Mathematical Society, Providence, RI, 2005.


\bibitem{Malik 2022}
S. Malik, O. M. Lavrenteva, M. Idan\ and\ A. Nir, Controlled stabilization of rotating toroidal drops in viscous linear flow, J. Fluid Mech. {\bf 952} (2022), Paper No. A38, 22 pp.

\bibitem{Vinet Zhedavon integrable chain 1998}
L. Vinet\ and\ A. Zhedanov, An integrable chain and bi-orthogonal polynomials, Lett. Math. Phys. {\bf 46} (1998), no.~3, 233--245.


	
\end{thebibliography}
\end{document}